\newcommand{\AAA}{A_n}
\newcommand{\ar}{A_R}
\newcommand{\af}{A_{M}}
\newcommand{\A}{A_n}
\newcommand{\Aml}{A_{n}}
\newcommand{\arml}{A_R}
\newcommand{\Ce}{C_n}
\newcommand{\CS}{G_n}
\newcommand{\CSt}{\Ce^{(S)}}
\newcommand{\En}{E_n}
\newcommand{\Enh}{{\widehat{E}_n}}
\newcommand{\Eml}{E_{n}}
\newcommand{\fvar}{\theta}
\newcommand{\ff}{f}
\newcommand{\ffr}{f_R}
\newcommand{\ffi}{f_I}
\newcommand{\Hml}{H_{n}}
\newcommand{\Yml}{Y_{n}}
\newcommand{\Ha}{H_n}
\newcommand{\Pre}{P_n}
\newcommand{\Y}{Y_n}
\newcommand{\D}{\mathcal{D}}
\newcommand{\YYY}{Y_n}
\newcommand{\ee}{\mathrm{e}}
\newcommand{\dd}{\mathrm{d}}
\newcommand{\ii}{\mathrm{i}}
\newcommand{\Rnn}{\mathbb{R}^{n\times n}}
\newcommand{\Rn}{\mathbb{R}^{n}}
\newcommand{\Lampos}{\Lambda_{\mathrm{pos}}}
\newcommand{\Unp}{U_{\text{sym}}}
\newcommand{\Unn}{U_{\text{skew}}}
\DeclareMathOperator*{\essinf}{essinf}
\DeclareMathOperator*{\esssup}{esssup}
\DeclareMathOperator{\diag}{diag}
\title{Preconditioners for symmetrized Toeplitz and multilevel Toeplitz matrices%
\thanks{Version of January 10, 2019. This research was supported by Engineering and Physical Sciences Research Council First Grant EP/R009821/1.}}
\author{J. Pestana%
\thanks{Department of Mathematics and Statistics,
         University of Strathclyde,
         26 Richmond Street,
         Glasgow, G1 1XH, UK
         (jennifer.pestana@strath.ac.uk).}}
\begin{document}
\maketitle

\begin{abstract}
When solving linear systems with nonsymmetric Toeplitz or multilevel Toeplitz matrices using Krylov subspace methods, the coefficient matrix may be symmetrized. The preconditioned MINRES method can then be applied to this symmetrized system, which allows rigorous upper bounds on the number of MINRES iterations to be obtained. 
However, effective preconditioners for symmetrized (multilevel) Toeplitz matrices are lacking. Here, we propose novel ideal preconditioners, and investigate the spectra of the preconditioned matrices. We show how these preconditioners can be approximated and demonstrate their effectiveness via numerical experiments. 
\end{abstract}

\begin{keywords}
  Toeplitz matrix, multilevel Toeplitz matrix, symmetrization, preconditioning, Krylov subspace method
\end{keywords}

\begin{AMS}
65F08, 65F10, 15B05, 35R11
\end{AMS}

\section{Introduction}
Linear systems 
\begin{equation}
\label{eqn:linsys}
\AAA x = b, 
\end{equation}
where $\AAA\in\mathbb{R}^{n\times n}$ is a Toeplitz or multilevel Toeplitz matrix, and $b\in\mathbb{R}^n$ arise in a range of applications. These include the discretization of partial differential and integral equations, time series analysis, and signal and image processing~\cite{chan2007,ng2004}. 
Additionally, demand for fast numerical methods for fractional diffusion problems---which have recently received significant attention---has renewed interest in the solution of Toeplitz and Toeplitz-like systems~\cite{DMS16,MDDM17,PKNS14,PaSu12,WWS10}. 

Preconditioned iterative methods are often used to solve systems of the form~\cref{eqn:linsys}. When $\AAA$ is Hermitian, CG~\cite{hestenes1952} and MINRES~\cite{paige1975} can be applied, and their descriptive convergence rate bounds guide the construction of effective preconditioners~\cite{chan2007,ng2004}. On the other hand, convergence rates of preconditioned iterative methods for nonsymmetric Toeplitz matrices are difficult to describe. Consequently, preconditioners for nonsymmetric problems are typically motivated by heuristics. 

As described in \cite{PeWa15} for Toeplitz matrices, and discussed in \cref{sec:ml_background} for the multilevel case, $\AAA$ is symmetrized by the exchange matrix 
\begin{equation}
\label{eqn:Y}
\YYY =
\begin{bmatrix} & & 1\\ & \iddots & \\ 1 & & \end{bmatrix}, 
\end{equation}
so that \cref{eqn:linsys} can be replaced by 
\begin{equation}
\label{eqn:symlinsys}
\YYY\AAA x = \YYY b,
\end{equation}
with the symmetric coefficient matrix $\YYY\AAA$. 
Although we can view $\YYY$ as a preconditioner, its role is not to accelerate convergence, and there is no guarantee that \cref{eqn:symlinsys} is easier to solve than \cref{eqn:linsys} \cite{FFHAS18,MaPe19}. Instead, the presence of $\YYY$ allows us to use preconditioned MINRES, with its nice properties and convergence rate bounds, to solve \cref{eqn:symlinsys}.  
We can then apply a secondary preconditioner $\Pre\in\Rnn$ to improve the spectral properties of $\YYY\AAA$, and therefore accelerate convergence. An additional benefit is that MINRES may be faster than GMRES~\cite{SaSc86} even when iteration numbers are comparable, since it requires only short-term recurrences. 

Preconditioned MINRES  requires a symmetric positive definite preconditioner $\Pre$, but it is not immediately clear how to choose this matrix when $\AAA$ is nonsymmetric. In~\cite{PeWa15} it was shown that absolute value circulant preconditioners, which we describe in the next section, give fast convergence for many Toeplitz problems. However, for some problems there may be more effective alternatives based on Toeplitz matrices (see, e.g.~\cite{ChNg93,HaNa94}). Moreover, multilevel circulant preconditioners are not generally effective for multilevel Toeplitz problems~\cite{Serr02,SeTy99,SeTy03}. 
Thus, alternative preconditioners for \cref{eqn:symlinsys} are needed.

In this paper, we describe ideal preconditioners for symmetrized (multilevel) Toeplitz matrices and show how these can be effectively approximated. To set the scene, we present background material in \cref{sec:background}. \Cref{sec:idealpre,sec:idealpreml} describe the ideal preconditioners for Toeplitz and multilevel Toeplitz problems, respectively. Numerical experiments in \cref{sec:numerics} verify our results and show how the ideal preconditioners can be efficiently approximated by circulant matrices or multilevel methods. Our conclusions can be found in \cref{sec:conc}.

\section{Background}
\label{sec:background}
In this section we collect pertinent results on Toeplitz and multilevel Toeplitz matrices. 

\subsection{Toeplitz and Hankel matrices}
\label{sec:backgroundtoep}
Let $\A\in\Rnn$ be the nonsingular Toeplitz matrix
\begin{equation}
\label{eqn:toeplitz}
\A =
\begin{bmatrix}
a_0 & a_{-1} & \dots & a_{-n+2} & a_{-n+1}\\
a_1 & a_0 & a_{-1} &  & a_{-n+2}\\
\vdots & a_1 & a_0 & \ddots & \vdots\\
a_{n-2} &  & \ddots & \ddots & a_{-1}\\
a_{n-1} & a_{n-2} & \dots & a_1 & a_0
\end{bmatrix}.
\end{equation}

In many applications, the matrix $\A$ is associated with a generating function $f \in L^1([-\pi,\pi])$ via its Fourier coefficients 
\begin{equation}
\label{eqn:fourier}
a_k = \frac{1}{2\pi}\int_{-\pi}^\pi f(\fvar)\ee^{-\ii k\theta} \dd \fvar, \quad k \in \mathbb{Z}.
\end{equation}
We use the notation $\A(f)$ when we wish to stress that a Toeplitz matrix $\A$ is associated with the generating function $f$. 
An important class of generating functions is the Wiener class, which is the set of functions satisfying 
\[f(\fvar) = \sum_{k=-\infty}^\infty a_k\ee^{-\ii k\theta}, \quad \sum_{k=-\infty}^\infty |a_k| < \infty.\]

Many properties of $\A(f)$ can be determined from $f$. For example, if $f$ is real than $\A(f)$ is Hermitian and  
its eigenvalues are characterized by $f$ \cite[pp. 64--65]{GrSe01}. On the other hand, if $f$ is complex-valued then $\A(f)$ is non-Hermitian for at least some $n$ and its singular values are characterized by $|f|$~\cite{Avra88, Part86, Wido89}. 

Circulant matrices are Toeplitz matrices of the form 
\[
\Ce =
\begin{bmatrix}
c_0 & c_{n-1} & \dots & c_{2} & c_{1}\\
c_1 & c_0 & c_{n-1} &  & c_{2}\\
\vdots & c_1 & c_0 & \ddots & \vdots\\
c_{n-2} &  & \ddots & \ddots & c_{n-1}\\
c_{n-1} & c_{n-2} & \dots & c_1 & c_0
\end{bmatrix}.
\]
They are diagonalized by the Fourier matrix, i.e., $\Ce = F_n^\ast \Lambda_n F_n$, where 
\[(F_n)_{j,k} = \frac{1}{\sqrt{n}}\ee^{\frac{2\pi\ii j k}{n}},\quad j,k = 0,\dotsc, n-1,\]
 and $\Lambda_n = \diag(\lambda_0,\dotsc,\lambda_{n-1})$, with 
\begin{equation}
\label{eqn:circeigs}
\lambda_k = \sum_{j=0}^{n-1}c_j\ee^{\frac{2\pi \ii j k}{n}}.
\end{equation}
We denote by $\Ce(f)$ the circulant with eigenvalues $\lambda_j = f(2\pi j/n)$, $j = 0,\dotsc, n-1$. 
The absolute value circulant~\cite{CNP96,NgPo01,PeWa15} derived from a circulant $\Ce$ is the matrix 
\begin{equation}
\label{eqn:abscirc}
|\Ce| = F_n^\ast |\Lambda_n| F_n.
\end{equation}

Closely related to Toeplitz matrices are Hankel matrices $\Ha\in\mathbb{R}^{n\times n}$,
\[
\Ha =
\begin{bmatrix}
a_1 & a_{2} & a_3 & \dots & a_{n}\\
a_2 & a_3 &  & \iddots & a_{n+1}\\
a_3 &  & \iddots& \iddots & \vdots\\
\vdots & \iddots &  \iddots&  & a_{2n-2}\\
a_{n} & a_{n+1} & \dots & a_{2n-2} & a_{2n-1}
\end{bmatrix},
\]
which have constant anti-diagonals. 
It is well known that a Toeplitz matrix can be converted to a Hankel matrix, or vice versa, by flipping the rows (or columns), i.e., via $\Y$ in \cref{eqn:Y}. 
Since Hankel matrices are necessarily symmetric, this means that any nonsymmetric Toeplitz matrix $\A$ can be symmetrized by applying $\Y$, so that 
\begin{equation}
\label{eqn:selfadjoint}
\Y\A = \A^T\Y.
\end{equation}
Alternatively, we may think of $\A$ being self-adjoint with respect to the bilinear form induced by $\Y$~\cite{gohberg2005,pestana2011a}. 
 
A matrix $\CS\in\Rnn$ is centrosymmetric  if 
\begin{equation}
\label{eqn:centro}
\CS\Y=\Y\CS
\end{equation}
 and is skew-centrosymmetric if $\CS\Y = -\Y\CS$. 
Thus,  \cref{eqn:selfadjoint} shows that symmetric Toeplitz matrices are centrosymmetric. It is clear from \cref{eqn:centro} that the inverse of a nonsingular centrosymmetric matrix is again centrosymmetric. Furthermore, nonsingular centrosymmetric matrices have a centrosymmetric square root~\cite[Corollary 1]{LZR07}\footnote{In~\cite{LZR07} the proof is given only for a centrosymmetric matrix of even dimension. However, the extension to matrices of odd dimension is straightforward.}.

\subsection{Multilevel Toeplitz and Hankel matrices} \label{sec:ml_background}
Multilevel Toeplitz matrices are generalizations of Toeplitz matrices. To define a generating function for a multilevel Toeplitz matrix, let $j = (j_1,\dotsc, j_p) \in \mathbb{Z}^p$ be a multi-index and consider a $p$-variate function $f \in L^1([-\pi,\pi]^p)$, $f:[-\pi,\pi]^p \rightarrow \mathbb{C}$. The Fourier coefficients of $f$ are defined as  
\begin{equation*}
a_j = a_{(j_1,\dotsc,j_p)}= \frac{1}{(2\pi)^p} \int_{[-\pi,\pi]^p}f(\fvar)\ee^{-\ii\langle \fvar,j\rangle}\,\dd \fvar, \qquad j \in \mathbb{Z}^p,
\end{equation*}
where  $\langle \fvar,j\rangle = \sum_{k=1}^p \fvar_kj_k$ and $\dd \fvar = \dd \fvar_1\dots\dd \fvar_p$  is the volume element with respect to the $p$-dimensional Lebesgue measure.

If $n = (n_1,\dotsc, n_p)\in\mathbb{N}^p$, with $n_i>1$, $i = 1,\dotsc, p$, and $\pi(n) = n_1\dots n_p$, then $f$ is the generating function of the multilevel Toeplitz matrix $\Aml(f)\in\mathbb{R}^{\pi(n)\times \pi(n)}$, where
\[
\Aml(f) = \sum_{j_1 = -n_1+1}^{n_1-1}\dots\sum_{j_p = -n_p+1}^{n_p-1}J_{n_1}^{(j_1)}\otimes \dots \otimes J_{n_p}^{(j_p)} {a}_{(j_1,\dotsc,j_p)}.
\]
Here, $J^{(k)}_r \in \mathbb{R}^{r\times r}$ is the matrix whose $(i,j)$th entry is one if $i-j=k$ and is zero otherwise. 

Similarly, we can define a multilevel Hankel matrix as 
\[\Hml(f) = \sum_{j_1 = 1}^{2n_1-1}\dots\sum_{j_p = 1}^{2n_p-1}K_{n_1}^{(j_1)}\otimes \dots \otimes K_{n_p}^{(j_p)} {a}_{(j_1,\dotsc,j_p)},\]
where $K^{(k)}_r \in \mathbb{R}^{r\times r}$ is the matrix whose $(i,j)$th entry is one if $i+j=k+1$ and is zero otherwise. Although a multilevel Hankel matrix does not necessarily have constant anti-diagonals, it is symmetric. 

Multilevel Toeplitz matrices can also be symmetrized by the exchange matrix $\Yml \in \mathbb{R}^{\pi(n) \times \pi(n)}$, $\Yml = Y_{n_1}\otimes \dots \otimes Y_{n_p}$.  To see this we use an approach analogous to that in the proof of~\cite[Lemma 5]{FaTi00}. The key point is that 
$Y_r J_r^{(k)} = K_r^{(r-k)}$, so that 
 \begin{align*}
\Yml \Aml(f) 
=& \sum_{j_1 = -n_1+1}^{n_1-1}\dots\sum_{j_p = -n_p+1}^{n_p-1}\left((Y_{n_1}J_{n_1}^{(j_1)})\otimes \dots \otimes (Y_{n_p}J_{n_p}^{(j_p)})\right) {a}_{(j_1,\dotsc,j_p)}\\
=& \sum_{j_1 = -n_1+1}^{n_1-1}\dots\sum_{j_p = -n_p+1}^{n_p-1}\left( K_{n_1}^{(n_1-j_1)}\otimes \dots \otimes K_{n_p}^{(n_p-j_p)}\right){a}_{(j_1,\dotsc,j_p)})\\
=& \sum_{j_1 = 1}^{2n_1-1}\dots\sum_{j_p = 1}^{2n_p-1}K_{n_1}^{(j_1)}\otimes \dots \otimes K_{n_p}^{(j_p)} {{b}}_{(j_1,\dotsc,j_p)},
\end{align*}
where  $b_{(j_1,\dotsc,j_p)} = a_{(n_1-j_1,\dotsc,n_p-j_p)}$. Thus, $\Yml\Aml(f)$  is a multilevel Hankel matrix, and hence is symmetric. 

\subsection{Assumptions and notation}
Throughout, we assume that all Toeplitz or multilevel Toeplitz matrices $\A$  are real, and are associated with generating functions in $L^1([-\pi,\pi]^p)$. We denote the real and imaginary parts of $\ff$ by $\ffr$ and $\ffi$, respectively, so that $\ff = \ffr + \ii \ffi$. We  assume that the symmetric part of $\A$, given by $\ar = (\A + \A^T)/2$, is positive definite, which is equivalent to requiring that $\ffr$ is essentially positive. Similarly, we assume that $|f|\ge \delta > 0$ for some constant $\delta$, so that $\A(|f|)$ is positive definite with $\lambda_{\min}(\A(|f|)\ge \delta$. Moreover,  $\lambda_{\min}(\A(|f|)>\delta$ if $\esssup|f|>\delta=\essinf |f|$ (see \cref{lem:eigfunction}).

\section{Preconditioning Toeplitz matrices}
 \label{sec:idealpre}
In this section we introduce our ideal preconditioners for \cref{eqn:symlinsys} when $\A$ is a Toeplitz matrix, and analyse the spectrum of the preconditioned matrices. Although these preconditioners may be too expensive to apply exactly, they can be approximated by, e.g., a circulant matrix or multigrid solver.

\subsection{The preconditioner $\ar$}\label{sec:idealpretoep}
The first preconditioner we consider is the symmetric part of $\A$, namely $\ar = (\A + \A^T)/2$, which was previously used to precondition the nonsymmetric problem \cref{eqn:linsys} (see \cite{HST05}). When applied to the symmetrized system \cref{eqn:symlinsys}, spectral information can be used to bound  the convergence rate of preconditioned MINRES. Accordingly, in this section we  characterize the eigenvalues of $\ar^{-\frac{1}{2}}\Y\A\ar^{-\frac{1}{2}}$.

We begin by stating a powerful result that characterizes the spectra of preconditioned  Hermitian Toeplitz matrices in terms of generating functions.

\begin{lemma}[{\cite[Theorem 3.1]{Serr99}}]\label{lem:eigfunction}
Let $f,g \in L^1([-\pi,\pi])$ be real-valued functions with $g$ essentially positive. Let $\A(f)$ and $\A(g)$ be the Hermitian Toeplitz matrices with generating functions  $f$ and $g$, respectively. Then, 
$\A(g)$ is positive definite and the eigenvalues of  $\A^{-1}(g)\A(f)$ lie in $(r,R)$, where $r<R$ and 
\[r = \essinf_{x \in [-\pi,\pi]}\frac{f(\fvar)}{g(\fvar)}, \qquad R = \esssup_{\fvar \in [-\pi,\pi]}\frac{f(\fvar)}{g(\fvar)}.\]
If $r=R$ then $\A^{-1}(g)\A(f)=I_n$, the identity matrix of dimension $n$. 
\end{lemma}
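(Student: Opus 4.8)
The plan is to reduce the spectral claim to an elementary fact about the generalized Rayleigh quotient of the Hermitian pencil $(\A(f),\A(g))$, using the integral representation of Toeplitz quadratic forms. First I would record that for any real $h\in L^1([-\pi,\pi])$ and any $x=(x_0,\dotsc,x_{n-1})^T\in\mathbb{C}^n$,
\[
x^\ast\A(h)x=\frac{1}{2\pi}\int_{-\pi}^{\pi}h(\fvar)\,|p_x(\fvar)|^2\,\dd\fvar,\qquad p_x(\fvar):=\sum_{k=0}^{n-1}x_k\ee^{\ii k\fvar},
\]
which follows by substituting the Fourier-coefficient formula \cref{eqn:fourier} into $x^\ast\A(h)x=\sum_{j,k}\bar{x}_j a_{j-k}x_k$; by Parseval, $\tfrac{1}{2\pi}\int_{-\pi}^{\pi}|p_x|^2\,\dd\fvar=\|x\|_2^2$. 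Since $g$ is essentially positive and the nonzero trigonometric polynomial $|p_x|^2$ vanishes only on a finite set, we get $x^\ast\A(g)x>0$ for all $x\neq0$, so $\A(g)$ is positive definite. Hence $\A^{-1}(g)\A(f)$ is similar to the Hermitian matrix $\A(g)^{-1/2}\A(f)\A(g)^{-1/2}$, its eigenvalues are real, and its smallest and largest eigenvalues are the extreme values over $x\neq0$ of
\[
\rho(x)=\frac{x^\ast\A(f)x}{x^\ast\A(g)x}=\frac{\int_{-\pi}^{\pi}f\,|p_x|^2\,\dd\fvar}{\int_{-\pi}^{\pi}g\,|p_x|^2\,\dd\fvar}.
\]

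Next I would bound $\rho$. Since $g>0$ a.e., the definitions of $r$ and $R$ give $rg\le f\le Rg$ a.e.; multiplying through by $|p_x|^2\ge0$ and integrating yields $r\le\rho(x)\le R$, so every eigenvalue lies in $[r,R]$. To sharpen this to the open interval when $r<R$, I would note that the nonnegative functions $f-rg$ and $Rg-f$ are each strictly positive on a set of positive measure: if, say, $f-rg=0$ a.e.\ then $f/g=r$ a.e.\ and $r=R$, a contradiction (and symmetrically for $Rg-f$). Because $|p_x|^2>0$ off a finite set, it follows that $\int_{-\pi}^{\pi}(f-rg)\,|p_x|^2\,\dd\fvar>0$ and $\int_{-\pi}^{\pi}(Rg-f)\,|p_x|^2\,\dd\fvar>0$ for every $x\neq0$, i.e.\ $r<\rho(x)<R$. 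As the extreme eigenvalues of $\A^{-1}(g)\A(f)$ are attained values of $\rho$, all of its eigenvalues lie in $(r,R)$.

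It remains to treat the degenerate case: if $r=R$, then $f/g$ equals this common value a.e., so $f=rg$ a.e.; equating Fourier coefficients gives $\A(f)=r\,\A(g)$ and hence $\A^{-1}(g)\A(f)=r I_n$ is a scalar matrix. I expect the only delicate point in the argument to be the strictness step---turning ``$Rg-f\ge0$ a.e.\ and not a.e.\ zero'' into a strictly positive integral against \emph{every} weight $|p_x|^2$---which rests on the elementary observation that a nonzero trigonometric polynomial vanishes only on a finite subset of $[-\pi,\pi]$; everything else is routine manipulation of the Rayleigh quotient.
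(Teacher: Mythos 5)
Your proof is correct. There is nothing in the paper to compare it against: \cref{lem:eigfunction} is imported verbatim from \cite[Theorem 3.1]{Serr99} and no proof is given in the text, so your argument stands on its own. What you write is essentially the standard (Grenander--Szeg\H{o}-style) argument underlying the cited result: the quadratic-form identity $x^\ast \A(h)x=\frac{1}{2\pi}\int_{-\pi}^{\pi}h(\fvar)|p_x(\fvar)|^2\,\dd\fvar$, positive definiteness of $\A(g)$ from essential positivity of $g$, reduction of the spectrum of $\A^{-1}(g)\A(f)$ to the generalized Rayleigh quotient of the Hermitian pencil $(\A(f),\A(g))$, and the key strictness step that a nonnegative, not-a.e.-zero integrand weighted by $|p_x|^2$ (a nonzero trigonometric polynomial, hence nonvanishing off a finite set) has strictly positive integral. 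Two minor points: (i) in the degenerate case you obtain $\A^{-1}(g)\A(f)=rI_n$, a scalar matrix, which is the correct conclusion; the lemma's ``$=I_n$'' as printed is literally true only when the common value $r=R$ equals $1$, so your version is if anything the more accurate one; (ii) your derivation of $rg\le f\le Rg$ and of the strict inequalities tacitly assumes $r$ and $R$ are finite---when $R=+\infty$ (or $r=-\infty$) the corresponding bound is vacuous and the claim holds trivially, which deserves one sentence but changes nothing.
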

\cref{lem:eigfunction} shows that in the Hermitian case we can bound the extreme eigenvalues of preconditioned Toeplitz matrices using scalar quantities. If bounds on the eigenvalues nearest the origin are also available it is possible to estimate the convergence rate of preconditioned MINRES applied to the Toeplitz system. Unfortunately, this  result does not carry over to nonsymmetric matrices, nor is an eigenvalue inclusion region alone sufficient to bound the convergence rate of a Krylov subspace method for nonsymmetric problems~\cite{APS98,GPS96}. However, in the following we show that by symmetrizing the Toeplitz matrix $\A$ we can obtain analogous results to \cref{lem:eigfunction}, even for nonsymmetric $\A$. 
As a first step, we quantify the perturbation of the (nonsymmetric) preconditioned matrix $\ar^{-\frac{1}{2}}\A\ar^{-\frac{1}{2}}$ from the identity. 

\begin{lemma}\label{lem:IEdecomp}
Let $\ff \in L^1([-\pi,\pi])$, and let $\ff = \ff_R + \ii \ff_I$, where $\ff_R$ and $\ff_I$ are real-valued functions with $\ffr$ essentially positive. Additionally, let $\A:=\A(\ff)\in\mathbb{R}^{n\times n}$ be the Toeplitz matrix associated with $\ff$. Then $\ar = \A(\ffr)  = (\A + \A^T)/2$ is symmetric positive definite and 
\[\ar^{-\frac{1}{2}}\A\ar^{-\frac{1}{2}} = I_n + \En,\] 
where 
\[\|\En\|_2 = \epsilon < \esssup_{\theta \in [-\pi,\pi]} \left|\frac{\ff_I(\theta)}{\ff_R(\theta)}\right|.\]
\end{lemma}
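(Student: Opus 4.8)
\emph{Proof sketch.}\ The plan is to subtract off the identity and bound the remainder by passing to a Hermitian preconditioned Toeplitz matrix, to which \cref{lem:eigfunction} applies. Writing $\ai := (\A - \A^T)/2$ for the skew-symmetric part of $\A$, we have $\A = \ar + \ai$, so formally $\ar^{-\frac{1}{2}}\A\ar^{-\frac{1}{2}} = I_n + \En$ with $\En := \ar^{-\frac{1}{2}}\ai\ar^{-\frac{1}{2}}$; the only real content of the lemma is the estimate on $\|\En\|_2$.

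First I would justify this setup. Since the map $h\mapsto\A(h)$ is linear in the Fourier coefficients, $\A = \A(\ffr) + \ii\A(\ffi)$, and a short Fourier-coefficient computation (using that $\A$ is real) identifies $\A(\ffr)$ with $\ar$ and $\ii\A(\ffi)$ with $\ai$. As Toeplitz matrices of the real-valued functions $\ffr$ and $\ffi$, both $\A(\ffr)$ and $\A(\ffi)$ are Hermitian; and since $\ffr$ is essentially positive, $\ar = \A(\ffr)$ is real, symmetric, and---by \cref{lem:eigfunction} applied with $g\equiv 1$---has positive spectrum, hence is symmetric positive definite and $\ar^{-\frac{1}{2}}$ is well defined. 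In particular $\En = \ii\,\ar^{-\frac{1}{2}}\A(\ffi)\ar^{-\frac{1}{2}}$.

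Next, set $M := \ar^{-\frac{1}{2}}\A(\ffi)\ar^{-\frac{1}{2}}$, which is Hermitian (a symmetric sandwich of the Hermitian matrix $\A(\ffi)$), so that $\|\En\|_2 = \|\ii M\|_2 = \|M\|_2 = \rho(M)$. Since $M$ is similar to $\ar^{-1}\A(\ffi) = \A^{-1}(\ffr)\A(\ffi)$, its eigenvalues are precisely the (real) eigenvalues of $\A^{-1}(\ffr)\A(\ffi)$. I would then invoke \cref{lem:eigfunction} with $f = \ffi$ and $g = \ffr$: these eigenvalues lie in the \emph{open} interval $(r,R)$, where $r = \essinf_{\theta}\ffi(\theta)/\ffr(\theta)$ and $R = \esssup_{\theta}\ffi(\theta)/\ffr(\theta)$. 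Using the elementary identity $\esssup_{\theta}|\ffi(\theta)/\ffr(\theta)| = \max\{|r|,|R|\}$ together with the elementary fact that every point of $(r,R)$ has modulus strictly smaller than $\max\{|r|,|R|\}$ (a short case split on the signs of $r$ and $R$), and the fact that $M$ has only finitely many eigenvalues, I obtain $\rho(M) < \max\{|r|,|R|\} = \esssup_{\theta}|\ffi(\theta)/\ffr(\theta)|$, i.e.\ $\epsilon := \|\En\|_2 < \esssup_{\theta}|\ffi(\theta)/\ffr(\theta)|$.

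The one delicate point---and the main obstacle---is the degenerate alternative $r = R$ in \cref{lem:eigfunction}, for which the argument above yields only $\le$. I would rule it out as follows: $r = R$ forces $\ffi = r\,\ffr$ almost everywhere; but since $\A$ is real its generating function satisfies $\ff(\theta) = \overline{\ff(-\theta)}$ a.e., so $\ffr$ is even and $\ffi$ is odd, and an odd function that equals a constant multiple of the essentially positive even function $\ffr$ must vanish a.e., whence $\ffi\equiv 0$ and $\A = \A^T$. Thus in the relevant nonsymmetric case $r < R$ strictly and the bound is strict; the symmetric case lies outside the present setting, where $\A$ would need no symmetrization at all. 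Apart from this, the remaining work is the two elementary interval facts, which are routine.
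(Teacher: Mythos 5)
Your proof is correct and follows essentially the same route as the paper's: decompose $\A = \A(\ffr) + \ii\A(\ffi)$, observe that $\ar^{-\frac{1}{2}}\A(\ffi)\ar^{-\frac{1}{2}}$ is Hermitian so its $2$-norm equals its spectral radius, and bound that radius via \cref{lem:eigfunction} with $f=\ffi$, $g=\ffr$. Your additional bookkeeping (the interval case split giving strictness and the exclusion of the degenerate case $r=R$, which the paper passes over silently) is a careful refinement of the same argument rather than a different approach.
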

\begin{proof}
It is easily seen from \cref{eqn:fourier} that 
 $\A(\ff) = \A(\ffr) +  \ii \A(\ffi)$. Moreover, from \cref{lem:eigfunction} we also know that $\ar = \A(\ffr)$ is symmetric positive definite and $\A(\ffi)$ is Hermitian.  It follows that 
\[\ar^{-\frac{1}{2}}\A\ar^{-\frac{1}{2}}  = \ar^{-\frac{1}{2}}(\ar + \ii \A(\ffi))\ar^{-\frac{1}{2}}  = I_n + \En,\]
where $\En =\ii \Enh= \ii\ar^{-\frac{1}{2}}\A(\ffi)\ar^{-\frac{1}{2}}$. 

To bound $\epsilon:=\|\En\|_2 = \|\Enh\|_2$, note that since $\Enh$ is Hermitian, $\|\Enh\|_2$ is equal to the spectral radius of $\Enh$. Applying \cref{lem:eigfunction} thus gives that 
\[\epsilon < \esssup_{\theta \in [-\pi,\pi]} \left|\frac{\ff_I(\theta)}{\ff_R(\theta)}\right|,\]
which completes the proof. 
\end{proof}

The above result tells us that the nonsymmetric preconditioned matrix will be close to the identity when the skew-Hermitian part of $\A$ is small, as expected. Although this enables us to bound the singular values of $\ar^{-\frac{1}{2}}\A\ar^{-\frac{1}{2}}$,
these cannot be directly related to the convergence of e.g., GMRES. In contrast, the following result will enable us to characterize the convergence rate of MINRES applied to \cref{eqn:symlinsys}. 

\begin{lemma}
\label{thm:symdecomp}
Let $\ff \in L^1([-\pi,\pi])$, and let $\ff = \ff_R + \ii \ff_I$, where $\ff_R$ and $\ff_I$ are real-valued functions with $\ffr$ essentially positive. Additionally, let $\A:=\A(\ff)\in\mathbb{R}^{n\times n}$ be the Toeplitz matrix associated with $\ff$. Then the symmetric positive definite matrix $\ar = \A(\ffr)  = (\A + \A^T)/2$ is such that 
\begin{equation}
\label{eqn:symdecomp}
\ar^{-\frac{1}{2}}(\Y\A)\ar^{-\frac{1}{2}} = \Y + \Y \En,
\end{equation}
where $\Y$ is the exchange matrix in \cref{eqn:Y} and 
\[\|\Y \En\|_2 =\epsilon < \esssup_{\theta \in [-\pi,\pi]}\left| \frac{\ffi(\theta)}{\ffr(\theta)}\right|.\]
\end{lemma}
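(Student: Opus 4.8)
The plan is to reduce the statement to \cref{lem:IEdecomp} by exploiting the fact that $\ar = \A(\ffr)$ is a \emph{symmetric} Toeplitz matrix, hence centrosymmetric, and that this property is inherited by its positive definite square root and its inverse. From \cref{lem:IEdecomp} we already have that $\ar$ is symmetric positive definite and
\[
\ar^{-\frac12}\A\ar^{-\frac12} = I_n + \En, \qquad \|\En\|_2 = \epsilon < \esssup_{\theta\in[-\pi,\pi]}\left|\frac{\ffi(\theta)}{\ffr(\theta)}\right|,
\]
so the only work is to move the exchange matrix $\Y$ past the symmetric factors $\ar^{-\frac12}$.

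The key structural observation is that $\ar$ commutes with $\Y$. Indeed, applying \cref{eqn:selfadjoint} to the symmetric matrix $\ar$ gives $\Y\ar = \ar^T\Y = \ar\Y$, i.e.\ $\ar$ is centrosymmetric. I would then argue that the principal (symmetric positive definite) square root $\ar^{\frac12}$ is centrosymmetric as well: since $\Y$ is a symmetric involution ($\Y^T = \Y$, $\Y^2 = I_n$), the matrix $\Y\ar^{\frac12}\Y$ is symmetric positive definite and squares to $\Y\ar\Y = \ar$, so by uniqueness of the positive definite square root $\Y\ar^{\frac12}\Y = \ar^{\frac12}$, that is, $\Y\ar^{\frac12} = \ar^{\frac12}\Y$. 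Taking inverses yields $\Y\ar^{-\frac12} = \ar^{-\frac12}\Y$ too. (Alternatively one could simply invoke the centrosymmetric-square-root fact recalled in \cref{sec:backgroundtoep}, though the uniqueness argument is self-contained.)

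With the commutation in hand, the decomposition \cref{eqn:symdecomp} follows by inserting $I_n = \ar^{\frac12}\ar^{-\frac12}$ between $\Y$ and $\A$:
\[
\ar^{-\frac12}(\Y\A)\ar^{-\frac12} = \bigl(\ar^{-\frac12}\Y\ar^{\frac12}\bigr)\bigl(\ar^{-\frac12}\A\ar^{-\frac12}\bigr) = \Y\,(I_n + \En) = \Y + \Y\En,
\]
where $\ar^{-\frac12}\Y\ar^{\frac12} = \Y$ because $\Y$ commutes with $\ar^{\frac12}$ and hence with $\ar^{-\frac12}$. For the norm bound, since $\Y$ is orthogonal we get $\|\Y\En\|_2 = \|\En\|_2 = \epsilon$, and the inequality $\epsilon < \esssup_\theta|\ffi(\theta)/\ffr(\theta)|$ is precisely the one supplied by \cref{lem:IEdecomp}.

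The only real obstacle is justifying $\Y\ar^{\pm\frac12} = \ar^{\pm\frac12}\Y$; once that is settled, the rest is a single line of algebra together with unitary invariance of the spectral norm. I would take care to emphasise that $\ar^{\frac12}$ denotes the principal square root, so that the uniqueness argument applies and the notation is consistent with the use of $\ar^{-\frac12}$ elsewhere in the paper.
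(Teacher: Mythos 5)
Your proof is correct and follows essentially the same route as the paper: commute $\Y$ past $\ar^{-\frac{1}{2}}$ via centrosymmetry, apply \cref{lem:IEdecomp}, and use orthogonality of $\Y$ for the norm bound. The only difference is that you justify the centrosymmetry of $\ar^{\pm\frac{1}{2}}$ with a self-contained uniqueness-of-the-positive-definite-square-root argument, whereas the paper simply cites the centrosymmetric square root result of \cite{LZR07}; both are valid.
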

\begin{proof}
Since $\ar$ is a symmetric Toeplitz matrix, it is centrosymmetric. Hence, $\ar^{-\frac{1}{2}}$ is centrosymmetric (see \cref{eqn:centro} and \cite{LZR07}), so that $\Y\ar^{-\frac{1}{2}} = \ar^{-\frac{1}{2}}\Y$. Combining this  with \cref{lem:IEdecomp} shows that 
\[\ar^{-\frac{1}{2}}(\Y\A)\ar^{-\frac{1}{2}} = \Y(\ar^{-\frac{1}{2}}\A\ar^{-\frac{1}{2}}) = \Y(I_n+\En) = \Y + \Y \En. \] 
Since $\Y$ is orthogonal, \(\|\Y \En\|_2 = \|\En\|_2\), and the result follows from~\cref{lem:IEdecomp}. 
\end{proof}

Applying Weyl's theorem \cite[Theorem 4.3.1]{HoJo90} to \cref{eqn:symdecomp} shows that the eigenvalues of $\ar^{-\frac{1}{2}}(\Y\A)\ar^{-\frac{1}{2}}$ lie in $[-1-\epsilon,-1+\epsilon]\cup [1-\epsilon,1+\epsilon]$. However, as $\epsilon$ grows, eigenvalues could move close to the origin, and hamper MINRES convergence. We will show that this cannot happen in the following result. 
\begin{theorem}
\label{thm:symeigs}
Let $\ff \in L^1([-\pi,\pi])$, and let $\ff = \ff_R + \ii \ff_I$, where $\ff_R$ and $\ff_I$ are real-valued functions with $\ffr$ essentially positive. Additionally, let $\A:=\A(\ff)\in\mathbb{R}^{n\times n}$ be the Toeplitz matrix associated with $\ff$ and let $\ar = \A(\ffr)  = (\A + \A^T)/2$. Then, the eigenvalues of $\ar^{-\frac{1}{2}}(\Y\A)\ar^{-\frac{1}{2}} $ lie in $[-1-\epsilon,-1]\cup[1,1+\epsilon]$, where 
\begin{equation}\label{eqn:arscalar}\epsilon < \esssup_{\theta \in [-\pi,\pi]} \left|\frac{\ffi(\theta)}{\ffr(\theta)}\right|.\end{equation}
\end{theorem}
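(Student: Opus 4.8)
The plan is to build on \cref{thm:symdecomp}, which already supplies the decomposition $M := \ar^{-\frac{1}{2}}(\Y\A)\ar^{-\frac{1}{2}} = \Y + \Y\En$ with $\|\Y\En\|_2 = \epsilon$ satisfying \cref{eqn:arscalar}. Since $\Y\A$ is symmetric (by \cref{eqn:selfadjoint}) and $\ar^{-\frac{1}{2}}$ is real symmetric, $M$ is real symmetric and hence has real eigenvalues, which is what makes a two-sided estimate meaningful. The upper bound is then immediate: for every eigenvalue $\lambda$ of $M$, $|\lambda| \le \|M\|_2 \le \|\Y\|_2 + \|\Y\En\|_2 = 1+\epsilon$. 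The real content of the theorem is the lower bound $|\lambda|\ge 1$, i.e., that no eigenvalue escapes into $(-1,1)$; this is not ruled out by the Weyl inclusion $[-1-\epsilon,-1+\epsilon]\cup[1-\epsilon,1+\epsilon]$ noted after \cref{thm:symdecomp}.

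For the lower bound I would exploit the structure of $\En$. From the proof of \cref{lem:IEdecomp} we have $\ii\A(\ffi) = (\A - \A^T)/2$, so $\En = \ar^{-\frac{1}{2}}\bigl((\A-\A^T)/2\bigr)\ar^{-\frac{1}{2}}$ is a real \emph{skew-symmetric} matrix. Moreover it anticommutes with $\Y$: because $\ar^{-\frac{1}{2}}$ is centrosymmetric (so $\Y\ar^{-\frac{1}{2}}\Y = \ar^{-\frac{1}{2}}$, already used in \cref{thm:symdecomp}) and $\Y\A\Y = \A^T$ for any Toeplitz matrix (which is \cref{eqn:selfadjoint} rewritten), one obtains $\Y\tfrac{\A-\A^T}{2}\Y = -\tfrac{\A-\A^T}{2}$ and hence $\Y\En\Y = -\En$.

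Feeding this into $M^2$ gives
\[
M^2 = (\Y+\Y\En)\,\Y(I_n+\En) = (\Y\Y+\Y\En\Y)(I_n+\En) = (I_n-\En)(I_n+\En) = I_n - \En^2 .
\]
Since $\En$ is real and skew-symmetric, $\En^2 = -\En^T\En \preceq 0$, so $M^2 = I_n - \En^2 \succeq I_n$. Consequently, for any unit eigenvector $v$ of $M$ with eigenvalue $\lambda$ we have $\lambda^2 = v^T M^2 v \ge 1$, i.e., $|\lambda| \ge 1$. Combining this with $|\lambda| \le 1+\epsilon$ places every eigenvalue of $M$ in $[-1-\epsilon,-1]\cup[1,1+\epsilon]$, and the scalar bound \cref{eqn:arscalar} is inherited from \cref{thm:symdecomp}.

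I expect the only delicate point to be justifying the anticommutation $\Y\En = -\En\Y$ with care, since it rests on distinguishing which Toeplitz matrices are centrosymmetric (the symmetric part $\ar$, hence $\ar^{-\frac{1}{2}}$) from those that are skew-centrosymmetric (the skew-symmetric part $(\A-\A^T)/2$). Once that identity is established, the computation $M^2 = I_n - \En^2$ and the semidefiniteness argument are routine.
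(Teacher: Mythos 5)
Your proof is correct, but it takes a genuinely different and more compact route than the paper. The paper also starts from $\ar^{-\frac{1}{2}}(\Y\A)\ar^{-\frac{1}{2}} = \Y + \Y\En$ and the observation that $\Y\En$ is symmetric and skew-centrosymmetric, but then invokes the eigenstructure of such matrices (eigenvalues in $\pm\lambda$ pairs, zero-eigenvectors of the form $x\pm\Y x$), builds an explicit orthogonal eigenbasis, and performs two successive similarity transformations to reduce $\Y+\Y\En$ to $2\times 2$ blocks $Z+\Sigma_k$ with eigenvalues $\pm\sqrt{1+\lambda_k^2}$, plus possibly $\pm 1$. Your argument replaces all of this by the single algebraic identity: since $\En = \ar^{-\frac{1}{2}}\tfrac{\A-\A^T}{2}\ar^{-\frac{1}{2}}$ is real skew-symmetric and anticommutes with $\Y$ (which you correctly derive from $\Y\A\Y=\A^T$ and the centrosymmetry of $\ar^{-\frac{1}{2}}$, the same facts the paper uses), one gets $M^2 = (I_n-\En)(I_n+\En) = I_n - \En^2 = I_n + \En^T\En \succeq I_n$, whence $|\lambda|\ge 1$ for every (real) eigenvalue of the symmetric matrix $M$; the upper bound $1+\epsilon$ follows as you say (indeed your identity even gives the slightly sharper $|\lambda|\le\sqrt{1+\epsilon^2}$). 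What your approach buys is brevity and independence from the cited structural results on skew-centrosymmetric matrices; what the paper's longer construction buys is the finer byproduct that the spectrum consists exactly of the pairs $\pm\sqrt{1+\lambda_k^2}$, with $\lambda_k$ the positive eigenvalues of $\Y\En$, together with possibly $1$ and $-1$ --- i.e., the eigenvalues of the preconditioned matrix are characterized, not merely localized. Your identification of the anticommutation $\Y\En\Y=-\En$ as the delicate point is apt, and your justification of it is complete.
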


\begin{proof}
We know from \cref{thm:symdecomp} that 
\[\ar^{-\frac{1}{2}}(\Y\A)\ar^{-\frac{1}{2}} = \Y + \Y \En, \]
where $\|\Y\En\|_2 < \epsilon$ and $\Y$ has eigenvalues $\pm 1$. 
Thus, as discussed above, the eigenvalues of $\ar^{-\frac{1}{2}}(\Y\A)\ar^{-\frac{1}{2}} $ lie in $[-1-\epsilon,-1+\epsilon]\cup [1-\epsilon,1+\epsilon]$. Hence, all that remains is to improve the bounds on the eigenvalues nearest the origin. 
Our strategy for doing so will be to apply  successive similarity transformations to $\Y + \Y \En$; as a byproduct, we will characterize the eigenvalues of $\ar^{-\frac{1}{2}}(\Y\A)\ar^{-\frac{1}{2}}$ in terms of the eigenvalues of $\Y \En$. 

Before applying our first similarity transform, we  recall from the proofs of \cref{lem:IEdecomp,thm:symdecomp} that $\ii \A(\ffi)$ is skew-symmetric and Toeplitz, while $\ar^{-\frac{1}{2}}$ is symmetric and centrosymmetric. It follows that  
$\Y \En$ is symmetric but  skew-centrosymmetric.  
Skew-centrosymmetry implies that whenever $(\lambda, x)$, $\lambda\ne 0$ is an eigenpair of $\Y \En$, then so is $(-\lambda, \Y x)$~\cite{HBW90,Pres98,Tren04}.  
Additionally, any eigenvectors of $\Y \En$ corresponding to a zero eigenvalue can be expressed as a linear combination of vectors of the form $x \pm \Y x$, $x\in\mathbb{R}^n$~\cite[Theorem 17]{Tren04}. 
Therefore, $\Y\En$ has eigendecomposition $\Y \En = U_n \Lambda_n U_n^T$, 
where 
\begin{equation}
\label{eqn:Lambda}
\Lambda_n = 
\begin{blockarray}{cccc}
& m_1 & m_1& m_2\\
\begin{block}{c[c c c]}
m_1 & \Lampos & & \\
 m_1& & -\Lampos & & \\
m_2 & & & 0 \\
\end{block}
\end{blockarray}
\end{equation}
and 
\begin{equation}
\label{eqn:U}
U_n = 
\begin{blockarray}{cccc}
m_1 & m_1 & m_3 & m_4\\
\begin{block}{[c c c c]}
U_{\mathrm{pos}} & \Y U_{\mathrm{pos}} & \Unp + \Y \Unp & \Unn - \Y \Unn\\
\end{block}
\end{blockarray},
\end{equation}
where $n = 2m_1 + m_2$ and $m_2 = m_3 + m_4$. 
Since $\Y\En$ is symmetric, we may assume that $U_n$ is orthogonal. 
We can now apply the first similarity transform, namely 
\begin{equation}
\label{eqn:strans1}
 U_n^T(\Y+\Y E)U_n = U_n^T \Y U_n + \Lambda_n .
 \end{equation}

Using the orthogonality of the columns of $U_n$, it is straightforward to show that 
\[U_n^T \Y U_n = 
\begin{bmatrix}
  & I_{m_1} &  & \\
  I_{m_1}&   &  &  \\
  &   & I_{m_3}& \\
  &   &  & -I_{m_4}\\
\end{bmatrix}. 
\]
Thus, $U_n^T\Y U_n= Q_n\Gamma_n Q_n^T$, where 
\[
\Gamma_n =
\begin{bmatrix}
 \widehat\Gamma_{2m_1} & & \\
 & I_{m_3} & \\
 & & -I_{m_4}\\
\end{bmatrix},\quad \text{ and } \quad 
Q_n = 
\left[ 
\begin{array}{c|c c}
\multirow{3}{*}{$\widehat Q$} & & \\
 & I_{m_3} & \\
 & & -I_{m_4}\\
\end{array}
\right].
\]
Here, $\widehat\Gamma_{2m_1} = \diag(1,-1,\dotsc,1,-1)$ and 
the $k$th column of $\widehat Q\in\mathbb{R}^{n\times 2m_1}$ is given by 
\[\widehat q_k = \begin{cases}
\frac{1}{\sqrt{2}}(e_{k} + e_{m_1+k}), & k \text{ odd},\\
\frac{1}{\sqrt{2}}(e_{ k} - e_{m_1+k}), & k \text{ even},
\end{cases}\]
with $e_j\in\Rn$ the $j$th unit vector. 
Consequently, our second similarity transform gives  
\begin{equation}
\label{eqn:simtrans2}
Q_n^T U_n^T (\Y + \Y \En) U_n Q_n =  \Gamma_n + Q_n^T \Lambda_n Q_n,
\end{equation}
with 
\[Q_n^T \Lambda_n Q_n = 
\left[
\begin{array}{c|c c}
 \begin{matrix} \Sigma_1 & & & \\ & \Sigma_2 & & \\ & & \ddots & \\ & & & \Sigma_{m_1}\end{matrix} & & \\
\hline
 & 0 & \\
 & &0\\ 
\end{array}
\right],
\]
where if 
\(\Lampos = 
\diag(\lambda_1,\lambda_2,\dotsc,\lambda_{m_1})\) then 
\[\Sigma _k= \begin{bmatrix} 0 & \lambda_k \\ \lambda_k & 0\end{bmatrix}.\]
Hence, letting 
\[Z = \begin{bmatrix}1 & \\ & -1 \end{bmatrix}\]
we find that 
\[ \Gamma + Q_n^T \Lambda_n Q_n = 
\begin{bmatrix}
\begin{matrix}Z+ \Sigma_1 & & & \\ &Z+ \Sigma_2 & & \\ & & \ddots & \\ & & & Z+\Sigma_{m_1}\end{matrix} & & \\
 & I_{m_3} & \\
 & &-I_{m_3}\\ 
\end{bmatrix}.
\]
Since the eigenvalues of $Z+ \Sigma_k$ are $\pm\sqrt{1+\lambda_k^2}$ we see from \cref{eqn:simtrans2} that the eigenvalues of $\ar^{-\frac{1}{2}}(\Y\A)\ar^{-\frac{1}{2}} $ are  $\pm\sqrt{1+\lambda_k^2}$, $k = 1, \dotsc, m_1$, and possibly one or both of  $1$ and $-1$. Hence, the eigenvalues are at least 1 in magnitude. This completes the proof. 
\end{proof}

\cref{thm:symeigs} characterizes the eigenvalues of $\ar^{-\frac{1}{2}}(\Y\A)\ar^{-\frac{1}{2}} $, and hence the convergence rate of preconditioned MINRES, in terms of the scalar quantity in \cref{eqn:arscalar}. Thus, we expect that the preconditioner $\ar$ will perform best when $\A$ is nearly symmetric, and we investigate this in \cref{sec:numerics}. However, irrespective of the degree of nonsymmetry of $\A$, \cref{thm:symeigs} shows that the eigenvalues of the preconditioned matrix are at least bounded away from the origin.

\subsection{The preconditioner $\af$}\label{sec:idealpretoepabs}
We saw in the previous section that $\ar$ is an effective preconditioner when the degree of nonsymmetry of $\A$ is not too large. For problems that are highly nonsymmetric, however, a different preconditioner may be more effective. Here, motivated by the success of absolute value preconditioning, we consider the preconditioner $\af = \A(|f|)$  instead. The following result describes the asymptotic eigenvalue distribution of $\af^{-1}\Y\A$. 
\begin{theorem}
\label{thm:absfeigs}
Assume that $f \in L^{\infty}([-\pi,\pi])$ with $0<\delta \le |f(\fvar)|$ for all $\fvar\in[-\pi,\pi]$. Then, if $\af = \A(|f|)$, 
\[(\af)^{-1}\Y\A(f) = \Y\A(\widetilde{f}) +  \En,\]
where $\widetilde{f} = f/|f|$ and  $\|\En\|_2 =o(n)$ as $n\rightarrow \infty$. Moreover, the eigenvalues of $\Y\A(\widetilde{f})$ lie in $[-1,1]$. 
\end{theorem}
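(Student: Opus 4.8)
The plan is to treat the two assertions separately. The spectral inclusion for $\Y\A(\widetilde{f})$ is exact and I will deal with it first; the decomposition then only requires an estimate on $\En$, and I will in fact obtain a bound \emph{uniform in $n$}, so that $\|\En\|_2=o(n)$ holds trivially.

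\emph{Eigenvalues of $\Y\A(\widetilde{f})$.} Since $|f|\ge\delta>0$, the symbol $\widetilde{f}=f/|f|$ is well defined and $|\widetilde{f}|\equiv 1$, so $\widetilde{f}\in L^\infty([-\pi,\pi])$ with $\|\widetilde{f}\|_{L^\infty}=1$. Moreover $\A(\widetilde{f})$ is a \emph{real} matrix: because $\A$, and hence every Fourier coefficient $a_k$, is real, we have $f(-\theta)=\overline{f(\theta)}$, and since also $|f(-\theta)|=|f(\theta)|$ this yields $\widetilde{f}(-\theta)=\overline{\widetilde{f}(\theta)}$, i.e.\ $\widetilde{f}$ too has real Fourier coefficients. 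By \cref{eqn:selfadjoint}, $(\Y\A(\widetilde{f}))^{T}=\A(\widetilde{f})^{T}\Y=\Y\A(\widetilde{f})$, so $\Y\A(\widetilde{f})$ is real symmetric and therefore has real eigenvalues; and since $\Y$ is orthogonal and $\|\A_n(g)\|_2\le\|g\|_{L^\infty}$ for every $g\in L^\infty$ (a finite Toeplitz matrix is a compression of multiplication by its symbol), $\|\Y\A(\widetilde{f})\|_2=\|\A(\widetilde{f})\|_2\le\|\widetilde{f}\|_{L^\infty}=1$. Hence all eigenvalues of $\Y\A(\widetilde{f})$ lie in $[-1,1]$.

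\emph{The decomposition.} By definition $\En=\af^{-1}\Y\A(f)-\Y\A(\widetilde{f})$. The matrix $\af=\A(|f|)$ is a real symmetric positive definite Toeplitz matrix, hence centrosymmetric, so $\af$ and $\af^{-1}$ commute with $\Y$; consequently $\Y\A(\widetilde{f})=\af^{-1}\af\,\Y\A(\widetilde{f})=\af^{-1}\Y\,\af\A(\widetilde{f})$ and
\[\En=\af^{-1}\Y\bigl(\A(f)-\A(|f|)\A(\widetilde{f})\bigr).\]
Since $f=|f|\cdot\widetilde{f}$, the bracketed matrix is the difference between the Toeplitz matrix of a product of symbols and the product of the corresponding Toeplitz matrices, and is bounded crudely by $\|\A(f)\|_2+\|\A(|f|)\|_2\,\|\A(\widetilde{f})\|_2\le\|f\|_{L^\infty}+\|f\|_{L^\infty}$. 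Together with $\|\af^{-1}\|_2=\lambda_{\min}(\af)^{-1}\le\delta^{-1}$ (the standing assumption) and $\|\Y\|_2=1$, this gives $\|\En\|_2\le 2\|f\|_{L^\infty}/\delta$, independent of $n$, hence $o(n)$. More refined information is available from Widom's identity, which expresses $\A(f)-\A(|f|)\A(\widetilde{f})$ as a sum of two Hankel-type terms sitting in the leading and trailing principal blocks; that structure, rather than the bare norm bound, is what one would use in a finer analysis of the limiting eigenvalue distribution of $\af^{-1}\Y\A(f)$.

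I do not expect a serious obstacle for the statement exactly as phrased, since the required $o(n)$ is far weaker than the uniform bound the argument yields. The one point that genuinely needs care is the realness of $\A(\widetilde{f})$---equivalently the symmetry of $\Y\A(\widetilde{f})$, without which ``eigenvalues in $[-1,1]$'' would be meaningless---which must be traced back to the conjugate symmetry $f(-\theta)=\overline{f(\theta)}$ inherited from $\A$ being a real matrix; and one must arrange the manipulations with $\Y$ and the centrosymmetry of $\af$ so that $\af^{-1}$ emerges as a \emph{left} factor of $\En$, since that is what makes the $\delta^{-1}$ bound applicable.
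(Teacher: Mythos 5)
Your proof is correct for the statement as written, and on the error term it takes a genuinely different (more elementary) route than the paper. The paper obtains the splitting by invoking Proposition 5 of \cite{DCMS16}, which directly gives $\A(|f|)^{-1}\A(f)=\A(\widetilde{f})+\widetilde{E}_n$ with $\|\widetilde{E}_n\|_2=o(n)$, and then—exactly as you do—uses centrosymmetry of $\af$ and $\af^{-1}$ to commute $\Y$ through; the eigenvalue claim is likewise settled in both arguments by symmetry of $\Y\A(\widetilde{f})$ together with the bound $\|\A(\widetilde{f})\|_2\le\|\widetilde{f}\|_{L^\infty}=1$ (the paper cites Widom for the singular values, you use the compression-of-multiplication argument; same fact). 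What you do differently is to replace the citation by the identity $\En=\af^{-1}\Y\bigl(\A(f)-\A(|f|)\A(\widetilde{f})\bigr)$ and the crude estimate $\|\En\|_2\le 2\|f\|_{L^\infty}/\delta$, uniform in $n$, which indeed implies $o(n)$ trivially; you also make explicit the realness of $\A(\widetilde{f})$ via $\widetilde{f}(-\fvar)=\overline{\widetilde{f}(\fvar)}$, a point the paper leaves implicit but which is needed for $\Y\A(\widetilde{f})$ to be symmetric. The trade-off is worth noting: your uniform bound proves the literal statement with no machinery, but it carries no smallness or low-rank information about $\En$ (a perturbation of norm $2\|f\|_{L^\infty}/\delta$ could in principle move eigenvalues far from $\pm 1$), whereas the result of \cite{DCMS16} that the paper invokes is what underwrites the clustering of the eigenvalues of $\af^{-1}\Y\A(f)$ at $\pm 1$ discussed immediately after the theorem; to recover that finer conclusion one would still need the approximating-class/Hankel-structure refinement you allude to at the end rather than the bare norm bound.
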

\begin{proof}
{The conditions on $|f|$ guarantee that $\A(|f|)$ is invertible and that its eigenvalues (singular values) are bounded away from 0. Thus, by Proposition 5 in~\cite{DCMS16},  
\[\A(|f|)^{-1}\A(f) - \A(\widetilde f) = \widetilde{E}_n,\]
where $\|\widetilde{E}_n\|_2 =o(n)$ as $n\rightarrow \infty$. Since $\af$ is Hermitian and Toeplitz,  both $\af$ and its inverse are centrosymmetric. 
It follows that 
\[(\af)^{-1}(\Y\A(f)) = \Y\left((\af)^{-1}\A(f)\right)  = \Y\A(\widetilde f) + \En,\]
where $\En = \Y \widetilde{E}_n$ and $\|\En\|_2 = \|Y\widetilde{E}_n\|_2 = \|\widetilde{E}_n\|_2$. Hence $\|\En\|_2 = o(n)$ as $n\rightarrow \infty$. 
Since $\Y$ is unitary and $\Y\A(\widetilde f)$ is symmetric, the absolute values of the eigenvalues of $\Y\A(\widetilde{f})$ coincide with the singular values of $\A(\widetilde f)$, which in turn are bounded above by one~\cite{Wido89}. This proves the result.}
\end{proof}

A consequence of \cref{thm:absfeigs} is that the eigenvalues of $\A(|f|)^{-1}\Y\A$ lie in $[-1-\epsilon,1+\epsilon]$, where for large enough $n$ the parameter $\epsilon$ is small. Although eigenvalues may be close to the origin, most cluster at $-1$ and 1, in line with Theorem 3.4 in \cite{MaPe19}.

To conclude this section, we show how $\af$ can be approximated by circulant preconditioners. First, 
recall from \cref{sec:backgroundtoep} that $\Ce(f)$ is the preconditioner with eigenvalues  $\lambda_j = f(2\pi j/n)$, $j=0,\dotsc, n-1$. For large enough dimension $n$, we have that $\af = \Ce(|f|) + E_n + R_n$, where $E_n$ has small norm and $R_n$ has small rank~\cite[pp. 108--110]{GaSe17}, so that $\Ce(|f|)$ is a good approximation to $\af$ for large $n$. 

The matrix $C(|f|)$ can in turn be approximated by the Strang absolute value circulant preconditioner $|\CSt|$~\cite{CNP96,NgPo01,PeWa15}, where if $\CSt$ is the Strang circulant preconditioner~\cite{Stra86} for $\A$, with eigenvalues $\lambda_j$, $j = 1,\dotsc, n$, then the corresponding absolute value circulant preconditioner $|\CSt|$ has eigenvalues $|\lambda_j|$, $j = 1,\dotsc, n$. For this preconditioner, we obtain the following result. 

\begin{theorem}
\label{thm:strangabs}
Let $f:[-\pi,\pi]\rightarrow \mathbb{C}$ be in the Wiener class and let $\A = \A(f) \in \Rnn$. 
Then the Strang preconditioner $\CSt$, is such that 
$|\CSt| \rightarrow \Ce(|f|)$ as $n\rightarrow\infty$.
\end{theorem}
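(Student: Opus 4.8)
The plan is to compare the eigenvalues of $|\CSt|$ and $\Ce(|f|)$ via their generating/symbol values and show the difference tends to zero. Recall that the Strang circulant $\CSt$ is the circulant whose first column is built from the central diagonals of $\A(f)$: its entries $c_j$ are $c_j = a_j$ for $|j| \le \lfloor n/2 \rfloor$ (with the usual wraparound $c_{n-j}=a_{-j}$). Since $f$ is in the Wiener class, $f(\theta) = \sum_{k} a_k \ee^{-\ii k\theta}$ with $\sum_k |a_k| < \infty$. The first step is to recall the classical fact (see~\cite{Stra86,chan2007}) that the eigenvalues of $\CSt$ are $\lambda_j^{(S)} = \sum_{|k|\le \lfloor n/2\rfloor} a_k \ee^{-2\pi\ii jk/n}$, which are exactly the partial sums of the absolutely convergent series defining $f(2\pi j/n)$. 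Hence $\lambda_j^{(S)} \to f(2\pi j /n)$; more precisely, $\max_j |\lambda_j^{(S)} - f(2\pi j/n)| \le \sum_{|k| > \lfloor n/2 \rfloor} |a_k| =: \eta_n$, and $\eta_n \to 0$ as $n\to\infty$ because the series converges absolutely.

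The second step is to pass to absolute values. By definition $|\CSt| = F_n^\ast |\Lambda_n^{(S)}| F_n$ where $\Lambda_n^{(S)} = \diag(\lambda_1^{(S)},\dotsc,\lambda_n^{(S)})$, while $\Ce(|f|) = F_n^\ast \widetilde\Lambda_n F_n$ with $\widetilde\Lambda_n = \diag(|f(2\pi j/n)|)_{j}$. Since both matrices are diagonalized by the same unitary $F_n$, we have
\[
\big\||\CSt| - \Ce(|f|)\big\|_2 = \max_{j} \big|\, |\lambda_j^{(S)}| - |f(2\pi j/n)| \,\big| \le \max_j \big|\lambda_j^{(S)} - f(2\pi j/n)\big| \le \eta_n,
\]
using the reverse triangle inequality $\big||z|-|w|\big| \le |z-w|$. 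Since $\eta_n \to 0$, this gives $|\CSt| \to \Ce(|f|)$ in the spectral norm, which is the claimed convergence.

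I do not expect a serious obstacle here; the argument is essentially bookkeeping once one has the explicit eigenvalue formula for the Strang circulant and the Wiener-class tail bound. The one point requiring a little care is the indexing convention for the Strang preconditioner (even versus odd $n$, and which of $a_j$ or $a_{-j}$ is assigned to $c_{n-j}$); one should check that whichever convention is used, the eigenvalues really are the symmetric partial sums $\sum_{|k| \le m} a_k \ee^{-2\pi\ii jk/n}$ with $m = \lfloor n/2\rfloor$ (or the analogous centered window), so that the tail bound $\eta_n = \sum_{|k|>m}|a_k|$ applies uniformly in $j$. A secondary subtlety is purely cosmetic: $|f|$ need not itself lie in the Wiener class, but this is irrelevant since $\Ce(|f|)$ is defined directly through its eigenvalues $|f(2\pi j/n)|$, not through Fourier coefficients of $|f|$, so no regularity of $|f|$ beyond boundedness is needed for the statement to make sense.
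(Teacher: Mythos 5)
Your argument is correct and follows essentially the same route as the paper: the paper writes the Strang eigenvalues as the Dirichlet-kernel convolution $(\D_m\star f)(2\pi j/n)$, which is exactly your symmetric partial sum, invokes Wiener-class absolute (hence uniform) convergence to $f$, and compares eigenvalues using the common diagonalization by $F_n$. Your explicit tail bound $\eta_n=\sum_{|k|>m}|a_k|$ together with the reverse triangle inequality even makes the convergence quantitative in the spectral norm, which is a slightly cleaner formulation of the same idea.
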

\begin{proof}
Assume that $n$, the dimension of $\A$, is $n = 2m+1$. (The idea can be extended to the case of even $n$, as in \cite[p. 37]{ChYe92}.) Then, $\CSt = \Ce(\D_m\star f)$, where 
\[(\D_m\star f)(\fvar) = \frac{1}{2\pi}\int_{-\pi}^\pi \D_m(\phi) f(\fvar-y)\, \dd \phi = \sum_{-m}^m a_k \ee^{\frac{2\pi\ii j k}{n}} \]
is the convolution of $f$ with the Dirichlet kernel $\D$ \cite{ChYe92}, and $a_k$ is as in \cref{eqn:fourier}.  

Since $|\CSt| = |\Ce(\D_m\star f)|$ and $\Ce(|f|)$ are both diagonalized by the Fourier matrix, they will be identical if all their eigenvalues, defined by \cref{eqn:circeigs}, match. 
The eigenvalues of $\Ce(\D_m\star f)$ are $(\D_m\ast f)(2\pi j/n)$, $j = 0,\dotsc, n-1$. Hence the $j$th eigenvalue of $|\Ce(\D_m\star f)|$ is 
\[\lambda_j(|\Ce(\D_m\star f)|) = \left((\D_m\star f)\left(\frac{2\pi j}{n}\right)\, \overline{(\D_m\star f)\left(\frac{2\pi j}{n}\right)}\right)^{\frac{1}{2}}.\] 
Since $f$ is in the Wiener class, $(\D_m\star f)(\fvar)$ converges absolutely, hence uniformly to $f(\fvar)$. Thus, 
\begin{align*}
\lim_{n\rightarrow\infty}\lambda_j(|\Ce(\D_m\star f)|) &= \lim_{n\rightarrow\infty}\left((\D_m\star f)\left(\frac{2\pi j}{n}\right)\, \overline{(\D_m\star f)\left(\frac{2\pi j}{n}\right)}\right)^{\frac{1}{2}} \\
&= \left(f\left(\frac{2\pi j}{n}\right) \, \overline{f\left(\frac{2\pi j}{n}\right)}\right)^{\frac{1}{2}} 
= \left|f\left(\frac{2\pi j}{n}\right)\right|
= \lambda_j(\Ce(|f|).
\end{align*}
Since the eigenvalues of $|\Ce(\D_m\star f)|$ approach those of $\Ce(|f|)$ as $n\rightarrow \infty$  we obtain the result. 
\end{proof}

\section{Multilevel Toeplitz problems}
\label{sec:idealpreml}
We now extend the results of  \cref{sec:idealpre} to multilevel Toeplitz matrices.

\subsection{The preconditioner $\ar$}
\label{sec:idealpretoepml}
The results of \cref{sec:idealpretoep} carry over straightforwardly to the multilevel case. They depend on the following generalization of \cref{lem:eigfunction}. The result essentially appeared in Theorem 2.4\footnote{Although the result is stated for $f,g$ nonnegative, the proof also holds for indefinite $f$.} in \cite{Serr94}.  
\begin{lemma}[{\cite{Serr94}}]
\label{lem:eigfunctionml}
Let $f,g\in L^1([-\pi,\pi]^p)$  with $g$ essentially positive. Let 
\[r :=\essinf_{\fvar \in [-\pi,\pi]^p} \frac{f(\fvar)}{g(\fvar)} \quad R:=\esssup_{\fvar \in [-\pi,\pi]^p} \frac{f(\fvar)}{g(\fvar)}.\]
Then the eigenvalues of $\Aml^{-1}(g)\Aml(f)$  lie in $(r,R)$ if $r<R$. If $r=R$ then $\A^{-1}(g)\A(f)=I_n$, where $I_n$ is the identity matrix of dimension $\pi(n) = n_1\cdots n_p$.  
\end{lemma}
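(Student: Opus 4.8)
\textbf{Proof proposal for \Cref{lem:eigfunctionml}.}

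The plan is to reduce the multilevel statement to the already-established one-level machinery (\Cref{lem:eigfunction}), since the essential content is the Rayleigh-quotient characterization of the spectrum of $\Aml^{-1}(g)\Aml(f)$. First I would observe that $\Aml^{-1}(g)\Aml(f)$ is similar to $\Aml^{-1/2}(g)\Aml(f)\Aml^{-1/2}(g)$, which is Hermitian because both $\Aml(f)$ and $\Aml(g)$ are Hermitian (as $f,g$ are real-valued). Its eigenvalues therefore equal the extreme values of the Rayleigh quotient $v^\ast \Aml(f) v / v^\ast \Aml(g) v$ over $v \ne 0$, and it suffices to show this ratio is confined to $(r,R)$, with equality to $r=R$ forcing the identity.

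The heart of the argument is the standard multilevel quadratic-form estimate: for any $c\in\Rn$ with $\|c\|_2=1$ one has
\begin{equation*}
c^\ast \Aml(h) c = \frac{1}{(2\pi)^p}\int_{[-\pi,\pi]^p} h(\fvar)\, |\phi_c(\fvar)|^2 \, \dd\fvar,
\end{equation*}
where $\phi_c(\fvar) = \sum_j c_j \ee^{-\ii\langle\fvar,j\rangle}$ is the trigonometric polynomial whose coefficients are the entries of $c$ (indexed by the appropriate multi-index range), and $\tfrac{1}{(2\pi)^p}\int |\phi_c|^2 = 1$. This is the tensorized version of the classical one-level identity and follows by expanding $\Aml(h)$ in the $J^{(j_1)}_{n_1}\otimes\cdots\otimes J^{(j_p)}_{n_p}$ basis and matching Fourier coefficients. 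Applying it to $h=f$ and $h=g$ with $v = \Aml^{-1/2}(g)w$ gives, after normalizing, that every eigenvalue $\mu$ of $\Aml^{-1}(g)\Aml(f)$ satisfies
\begin{equation*}
\mu = \frac{\int h f\,|\phi|^2}{\int h g\,|\phi|^2}\quad\text{for some admissible }\phi,\ \text{hence}\quad r \le \mu \le R,
\end{equation*}
using $r\,g(\fvar) \le f(\fvar) \le R\,g(\fvar)$ a.e.\ together with $g>0$ a.e. To get the open interval $(r,R)$ when $r<R$ I would argue that equality $\mu = R$ would force $f = R g$ a.e.\ on the support of $|\phi_c|^2$; since $\phi_c$ is a nonzero trigonometric polynomial its zero set has measure zero, so $f = Rg$ a.e.\ on $[-\pi,\pi]^p$, contradicting $\essinf(f/g) = r < R$. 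The same reasoning excludes $\mu=r$. Finally, if $r=R$ then $f = rg$ a.e., so $\Aml(f) = r\Aml(g)$ exactly (Fourier coefficients agree), giving $\Aml^{-1}(g)\Aml(f) = rI$; combined with the definition this is $I_n$ in the normalization of the lemma (one may note $r=R=1$ is the only consistent case, or simply cite that the ratio is constantly $r$). Positive definiteness of $\Aml(g)$ is immediate from the quadratic-form identity with $h=g$ essentially positive and $\phi_c\not\equiv 0$.

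The main obstacle is purely bookkeeping rather than conceptual: one must be careful that the trigonometric polynomial $\phi_c$ associated with a vector $c\in\mathbb{R}^{\pi(n)}$ has its frequencies ranging over the box $\{-n_1+1,\dots,n_1-1\}\times\cdots$ matched correctly against the Kronecker structure, and that the normalization constant $(2\pi)^{-p}$ and the $L^2$-isometry $\|c\|_2 = \|\phi_c\|_{L^2}/(2\pi)^{p/2}$ are handled consistently so the Rayleigh quotient really is the ratio of the two integrals against the common weight $|\phi_c|^2$. Once that identity is in place the estimate is a one-line application of $rg\le f\le Rg$, exactly as in \cite{Serr94,Serr99}; since the excerpt explicitly allows us to invoke \cite{Serr94} (and the one-level \Cref{lem:eigfunction}), I would keep this proof short and refer to those sources for the quadratic-form identity, spelling out only the tensor-product verification and the measure-zero argument that upgrades the closed interval to an open one.
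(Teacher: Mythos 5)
Your proposal is essentially the paper's route: the paper gives no proof of this lemma at all, citing Theorem 2.4 of \cite{Serr94} (with the footnoted remark that the argument works for indefinite $f$), and your reconstruction via the similarity to $\Aml^{-1/2}(g)\Aml(f)\Aml^{-1/2}(g)$, the tensorized quadratic-form identity $c^\ast \Aml(h)c=(2\pi)^{-p}\int h|\phi_c|^2$, the bound $rg\le f\le Rg$, and the measure-zero zero-set argument for strictness is exactly the standard argument underlying that citation, so it is correct in substance. Only cosmetic slips: the displayed ratio for $\mu$ has a stray $h$, the frequency box for $\phi_c$ should have $n_k$ (not $2n_k-1$) points per dimension since $c\in\mathbb{R}^{\pi(n)}$, and in the $r=R$ case one gets $\Aml^{-1}(g)\Aml(f)=rI_n$ (the ``$=I_n$'' is an imprecision already present in the paper's own statement, which you handled appropriately).
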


With this result,  \cref{lem:IEdecomp,thm:symdecomp,thm:symeigs} carry over directly to the multilevel case. In particular, we have the following  characterization of the eigenvalues of $\ar^{-\frac{1}{2}}(\Y\A)\ar^{-\frac{1}{2}}$. 

\begin{theorem}
\label{thm:symeigsml}
Let $\ff\in L^1([-\pi,\pi]^p)$, and let $\ff = \ff_R + \ii \ff_I$, where $\ff_R$ and $\ff_I$ are real-valued functions with $\ffr$ essentially positive. Additionally, let $\Aml:=\Aml(\ff)\in\mathbb{R}^{\pi(n)\times \pi(n)}$ be the multilevel Toeplitz matrix associated with $\ff$ and let $\arml = \Aml(\ffr)  = (\Aml + \Aml^T)/2$. Then, the eigenvalues of $\arml^{-\frac{1}{2}}(\Yml\Aml)\arml^{-\frac{1}{2}} $ lie in $[-1-\epsilon,-1]\cup[1,1+\epsilon]$, where 
\begin{equation}
\label{eqn:arscalarml}
\epsilon <\esssup_{\theta \in [-\pi,\pi]^p} \left(\left|\frac{\ff_I(\theta)}{\ff_R(\theta)}\right|\right).
\end{equation}
\end{theorem}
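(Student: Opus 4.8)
The plan is to follow the proof of \cref{thm:symeigs} almost verbatim, using \cref{lem:eigfunctionml} in place of \cref{lem:eigfunction} and the exchange matrix $\Yml = Y_{n_1}\otimes\dots\otimes Y_{n_p}$ in place of $\Y$. First I would establish the multilevel analogues of \cref{lem:IEdecomp,thm:symdecomp}. Linearity of the Fourier coefficients in \cref{eqn:fourier} gives $\Aml(\ff) = \Aml(\ffr) + \ii\Aml(\ffi)$; by \cref{lem:eigfunctionml} (with $g = \ffr$) the matrix $\arml = \Aml(\ffr)$ is symmetric positive definite, while $\Aml(\ffi)$ is Hermitian, so
\[\arml^{-\frac12}\Aml\arml^{-\frac12} = I_{\pi(n)} + \En, \qquad \En = \ii\,\arml^{-\frac12}\Aml(\ffi)\arml^{-\frac12}.\]
The matrix $-\ii\En = \arml^{-\frac12}\Aml(\ffi)\arml^{-\frac12}$ is Hermitian and similar to $\Aml^{-1}(\ffr)\Aml(\ffi)$, so \cref{lem:eigfunctionml} applied with numerator $\ffi$ and denominator $\ffr$ bounds $\|\En\|_2 = \epsilon$ by the right-hand side of \cref{eqn:arscalarml}, exactly as in \cref{lem:IEdecomp}. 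Since $\Aml(\ff)$ is real, $\ii\Aml(\ffi) = (\Aml - \Aml^T)/2$ is real and skew-symmetric, and hence so is $\En = \arml^{-\frac12}\bigl(\ii\Aml(\ffi)\bigr)\arml^{-\frac12}$.

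The only point that genuinely needs attention is the centrosymmetry bookkeeping, and the key observation is that $\Yml = Y_{n_1}\otimes\dots\otimes Y_{n_p}$ is precisely the exchange matrix of dimension $\pi(n)$, since the Kronecker product of the $Y_{n_i}$ reverses the lexicographic ordering of the multi-index set. Hence ``centrosymmetric with respect to $\Yml$'' is the ordinary notion, and the facts recalled in \cref{sec:backgroundtoep}---that the inverse and a square root of a nonsingular centrosymmetric matrix are centrosymmetric, and the eigenstructure of skew-centrosymmetric matrices from \cite{Tren04}---apply unchanged. The computation in \cref{sec:ml_background} shows that $\Yml\Aml(g)$ is a (symmetric) multilevel Hankel matrix for every $g$; taking $g = \ffr$ and using the symmetry of $\arml$ gives $\Yml\arml = \arml\Yml$, so $\arml$, and therefore $\arml^{-\frac12}$, is centrosymmetric. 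Taking $g = \ff$ and splitting $\Aml = \arml + \ii\Aml(\ffi)$ then forces $\Yml\,\ii\Aml(\ffi) = -\ii\Aml(\ffi)\,\Yml$, i.e.\ $\ii\Aml(\ffi)$ is skew-centrosymmetric, so $\Yml\,\ii\Aml(\ffi)$ is symmetric. Combining these as in \cref{thm:symdecomp} yields
\[\arml^{-\frac12}(\Yml\Aml)\arml^{-\frac12} = \Yml + \Yml\En,\]
where $\Yml\En = \arml^{-\frac12}\bigl(\Yml\,\ii\Aml(\ffi)\bigr)\arml^{-\frac12}$ is symmetric and skew-centrosymmetric with $\|\Yml\En\|_2 = \|\En\|_2 = \epsilon$.

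From here the argument of \cref{thm:symeigs} transfers word for word, since it uses only that $\Yml\En$ is symmetric and skew-centrosymmetric and that $\Yml$ is an orthogonal involution with eigenvalues $\pm1$: one writes $\Yml\En = U\Lambda U^T$ with the block structure of \cref{eqn:Lambda} and \cref{eqn:U}, conjugates by $U$ and then by the matrix $Q$ diagonalizing $U^T\Yml U$, and reads off that every eigenvalue of $\arml^{-\frac12}(\Yml\Aml)\arml^{-\frac12}$ equals $\pm\sqrt{1+\lambda_k^2}$ for some eigenvalue $\lambda_k$ of $\Yml\En$, or else $1$ or $-1$. Because $|\lambda_k| \le \epsilon$, all such values have modulus in $[1,1+\epsilon]$, which is the claim.

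I do not expect a serious obstacle. The one place to be careful is exactly the claim that $\Yml$ behaves identically to a single-level exchange matrix---so that the centrosymmetric square-root result \cite[Corollary 1]{LZR07} and the skew-centrosymmetric eigenvector characterization \cite[Theorem 17]{Tren04} may legitimately be invoked---together with verifying the centrosymmetry of the multilevel $\arml$; once these are in hand the proof is identical to the one-level case.
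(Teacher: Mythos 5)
Your proposal is correct and follows the same route the paper intends: the paper gives no separate proof of this theorem, stating only that \cref{lem:IEdecomp,thm:symdecomp,thm:symeigs} ``carry over directly'' once \cref{lem:eigfunction} is replaced by \cref{lem:eigfunctionml}, which is exactly what you do. Your verification that $Y_{n_1}\otimes\dots\otimes Y_{n_p}$ is the ordinary exchange matrix of dimension $\pi(n)$ under lexicographic ordering (so that the centrosymmetry, square-root, and skew-centrosymmetric eigenstructure results apply verbatim) correctly supplies the one detail the paper leaves implicit.
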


\cref{thm:symeigsml} characterizes the eigenvalues of $\arml^{-\frac{1}{2}}(\Yml\Aml)\arml^{-\frac{1}{2}} $, which are bounded away from the origin. In turn, this allows us to bound the convergence rate of preconditioned MINRES in terms of easily-computed quantity in \cref{eqn:arscalarml}.

\subsection{The preconditioner $\af$}
We can also extend the results in \cref{sec:idealpretoepabs} to the multilevel case. However, for multilevel problems this preconditioner is more challenging to approximate. Matrix algebra, e.g., block circulant, preconditioners will generally result in iteration counts that increase as the dimension increases, as previously discussed. On the other hand, constructing effective banded Toeplitz, or efficient multilevel, algorithms is challenging since it is generally necessary to compute elements of $\af$. Nonetheless, we present the following result for completeness. It directly generalizes the result for Toeplitz matrices, so is presented without proof. 


\begin{theorem}
\label{thm:absfeigsml}
Let $f: L^{\infty}([-\pi,\pi]^p)$, with  $0<\delta < |f(\fvar)|$ for all $\fvar\in[-\pi,\pi]^p$. Then, if $\af = \Aml(|f|)$ is the multilevel Toeplitz matrix generated by $|f|$, 
\[(\af)^{-1}\Y\A(f) = \Yml\Aml(\tilde{f}) +  \Eml,\]
where $\tilde{f} = f/|f|$ and  $\|\Eml\|_2 =o(n)$ as $n\rightarrow \infty$. Moreover, the eigenvalues of $\Yml\Aml(\tilde{f})$ lie in $[-1,1]$. 
\end{theorem}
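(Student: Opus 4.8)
\emph{Proof proposal.} The plan is to mimic the proof of \cref{thm:absfeigs} line by line, substituting each one-level ingredient by its multilevel counterpart. First, the hypotheses $f\in L^{\infty}([-\pi,\pi]^p)$ and $0<\delta<|f|$ guarantee, via \cref{lem:eigfunctionml} applied with generating functions $|f|$ and the constant function $1$, that $\af=\Aml(|f|)$ is Hermitian positive definite with $\lambda_{\min}(\af)\ge\delta$; hence $\af$ is invertible with $\|\af^{-1}\|_2\le\delta^{-1}$, while $\|\Aml(f)\|_2\le\|f\|_{\infty}$, so both quantities are uniformly bounded in $n$. With these bounds in hand, I would invoke the multilevel analogue of Proposition~5 of \cite{DCMS16}, namely
\[
\Aml(|f|)^{-1}\Aml(f)-\Aml(\widetilde f)=\widetilde E_n,\qquad \|\widetilde E_n\|_2=o(n)\ \text{as}\ n\to\infty,
\]
where $\widetilde f=f/|f|$ satisfies $|\widetilde f|\equiv1$. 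This is precisely the step where the one-level proof cites \cite{DCMS16}, and establishing its multilevel version is the main obstacle: it can be obtained either from GLT theory (the sequences $\{\Aml(|f|)\}$, $\{\Aml(f)\}$, $\{\Aml(\widetilde f)\}$ are GLT with symbols $|f|$, $f$, $\widetilde f$, the first bounded away from $0$, so $\{\Aml(|f|)^{-1}\Aml(f)\}$ and $\{\Aml(\widetilde f)\}$ share the symbol $\widetilde f$ and, using the uniform norm bounds above, differ by a term of vanishing normalized rank and norm) or by adapting the direct estimates of \cite{DCMS16} to the tensor-product structure of multilevel Toeplitz matrices.

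Second, I would record the relevant centrosymmetry. Since $|f|$ is real-valued, $\af$ is a real symmetric multilevel Toeplitz matrix; by the symmetrization computation in \cref{sec:ml_background}, $\Yml\af$ is a multilevel Hankel matrix, hence symmetric, so $\Yml\af=(\Yml\af)^T=\af^T\Yml=\af\Yml$, i.e.\ $\af$ commutes with $\Yml$, and therefore so does $\af^{-1}$. Consequently,
\[
(\af)^{-1}\big(\Yml\A(f)\big)=\Yml\big((\af)^{-1}\A(f)\big)=\Yml\Aml(\widetilde f)+\Eml,\qquad \Eml:=\Yml\widetilde E_n,
\]
and since $\Yml=Y_{n_1}\otimes\dots\otimes Y_{n_p}$ is orthogonal, $\|\Eml\|_2=\|\widetilde E_n\|_2=o(n)$, which is the asserted decomposition.

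Finally, for the eigenvalue localization: by \cref{sec:ml_background} the matrix $\Yml\Aml(\widetilde f)$ is a (symmetric) multilevel Hankel matrix, and since $\Yml$ is unitary the moduli of its eigenvalues coincide with the singular values of $\Aml(\widetilde f)$. The multilevel analogue of Widom's bound \cite{Wido89} (the singular values of a multilevel Toeplitz matrix are bounded above by the essential supremum of the modulus of its generating function) then gives that these singular values are at most $\esssup_{\theta\in[-\pi,\pi]^p}|\widetilde f(\theta)|=1$, so the eigenvalues of $\Yml\Aml(\widetilde f)$ lie in $[-1,1]$, completing the proof.
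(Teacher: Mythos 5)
Your proposal is correct and follows exactly the route the paper intends: the paper states \cref{thm:absfeigsml} without proof, remarking only that it ``directly generalizes'' \cref{thm:absfeigs}, and your argument is precisely that generalization---same decomposition via the (multilevel analogue of) Proposition~5 of \cite{DCMS16}, same use of the commutation of $\af$ with $\Yml$ (which you justify correctly through the multilevel Hankel symmetrization), and the same norm bound $\|\Aml(\widetilde f)\|_2\le\esssup|\widetilde f|=1$ for the eigenvalue localization. You also rightly flag the multilevel version of the cited proposition as the only nontrivial ingredient and indicate a workable way (GLT theory plus the uniform bounds $\|\af^{-1}\|_2\le\delta^{-1}$, $\|\Aml(f)\|_2\le\|f\|_\infty$) to obtain it, which is more than the paper itself supplies.
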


\section{Numerical experiments} \label{sec:numerics}
In this section we investigate the effectiveness of the preconditioners described above, and approximations to them, for the symmetrized system \cref{eqn:symlinsys}. We also compare the proposed approach to using nonsymmetric preconditioners for \cref{eqn:linsys} within preconditioned GMRES and LSQR~\cite{PaSa92}. All code is written in {\sc Matlab} (version 9.4.0) and is run on a quad-core, 62 GB RAM, Intel i7-6700 CPU with 3.20GHz\footnote{Code is available from \url{https://github.com/jpestana/fracdiff}}.  
We apply {\sc Matlab} versions of LSQR and MINRES, and a version of GMRES that performs right preconditioning. (Note that LSQR requires two matrix-vector products with the coefficient matrix and two preconditioner solves per iteration.) We take as our initial guess $x_0 = (1,1,\dotsc,1)^T/\sqrt{n}$, and we stop all methods when $\|r_k\|_2/\|r_0\|_2 < 10^{-8}$. When more than 200 iterations are required, we denote this by `---' in the tables.

When $\ar$ or $\af$ are too expensive to apply directly we use either a circulant or multigrid approximation. The multigrid preconditioner consists of a single V-cycle with damped Jacobi smoothing and Galerkin projections, namely linear or bilinear interpolation and restriction by full-weighting. The coarse matrices are also built by projection. The number of smoothing steps and the damping factor $\omega$ are stated below for each problem. The damping parameter is chosen by trial-and-error to minimize the number of iterations needed for small problems. 
When applying circulant preconditioners to~\eqref{eqn:symlinsys} we use the absolute value preconditioner in \cref{eqn:abscirc}
based on the Strang~\cite{Stra86}, optimal~\cite{Chan88} or superoptimal~\cite{Tyrt92} circulant preconditioner.

\begin{example}
\label{ex:1}
Our first example is from~\cite[Example 2]{HST05}, where numerical experiments indicated that $\ar$ is an effective preconditioner for the nonsymmetric system \cref{eqn:linsys} when GMRES is applied. The Toeplitz coefficient matrix $\A = \A(\ff)$ is formed from the generating function $\ff(\fvar)=(2-2 \cos(\fvar))(1+\ii \fvar)$. Since computing the Fourier coefficients for larger problems is time-consuming, smaller problems examined here. The right-hand side is a random vector (computed using the {\sc Matlab} function \texttt{randn}). 

The preconditioner $\ar:=\A(\ffr)$ is positive definite, since $\ffr(\fvar) = 2-2 \cos(\fvar)$ is essentially positive. Indeed, $\ar$ is the second-order finite difference matrix, namely the tridiagonal matrix with $2$ on the diagonal and $-1$ on the sub- and super-diagonals. Accordingly, $\ar$ can be applied directly with $O(n)$ cost. For comparison we also apply the optimal circulant preconditioner $\Ce$ and its absolute value counterpart $|\Ce|$. (The optimal circulant outperformed the Strang and superoptimal circulant preconditioners for this problem.)  The absolute value circulant approximates $\af$. 

\begin{table}[htbp]
{\footnotesize
\setlength{\tabcolsep}{.3em}
\caption{Iteration numbers and CPU times (in parentheses) for the optimal circulant preconditioner $\Ce$ and tridiagonal preconditioner $\ar$ for \cref{ex:1}.  }
\label{tab:ex1_ar}
\begin{center}
\begin{tabular}{r  *{3}{|rrrr}}
$n$ & \multicolumn{4}{c|}{GMRES} &  \multicolumn{4}{c|}{LSQR} &  \multicolumn{4}{c}{MINRES}\\ 
& \multicolumn{2}{c}{$\Ce$} & \multicolumn{2}{c}{$\ar$} & \multicolumn{2}{|c}{$\Ce$}  & \multicolumn{2}{c}{$\ar$} & \multicolumn{2}{|c}{$|\Ce|$} &  \multicolumn{2}{c}{$\ar$}\\
\hline
1023 & 37 & (0.058) & 67 & (0.067) & 105 & (0.15) & 62 & (0.05) & 82 & (0.057) & 68 & (0.029) \\
2047 & 48 & (0.13) & 68 & (0.12) & 186 & (0.4) & 67 & (0.081) & 111 & (0.12) & 70 & (0.046) \\
4095 & 62 & (0.25) & 69 & (0.22) & --- & --- & 73 & (0.13) & 170 & (0.21) & 71 & (0.065) \\
8191 & --- & --- & 72 & (0.51) & --- & --- & 78 & (0.2) & --- & --- & 72 & (0.13) \\
\end{tabular}
\end{center}
}
\end{table}

\cref{tab:ex1_ar} shows that  $\ar$ requires fewer iterations than $\Ce$ for MINRES and LSQR, and that MINRES with $\ar$ is the fastest method overall. The good performance of $\ar$ with MINRES can be explained by the clustered eigenvalues of $\ar^{-1}\A$. \cref{thm:symeigs} tells us that these eigenvalues lie in $[-1-\pi,-1]\cup [1,1+\pi]$, and  \cref{fig:ex1_ar} (b) shows that these bounds are tight. As discussed in~\cite{HST05}, the eigenvalues of $\ar^{-1}\A$ are also nicely clustered (see \cref{fig:ex1_ar} (a)), with real part $1$ and imaginary part in $[-\pi,\pi]$. Although we cannot rigorously link this  eigenvalue characterization to the rate of GMRES convergence, \cref{tab:ex1_ar} indicates that in this case $\ar$ is also a reasonable preconditioner for GMRES. 

\begin{figure}
\centering
\begin{subfigure}[b]{0.48\textwidth}
        \includegraphics[width=\textwidth,clip=true,trim = 3cm 9cm 4cm 9cm]{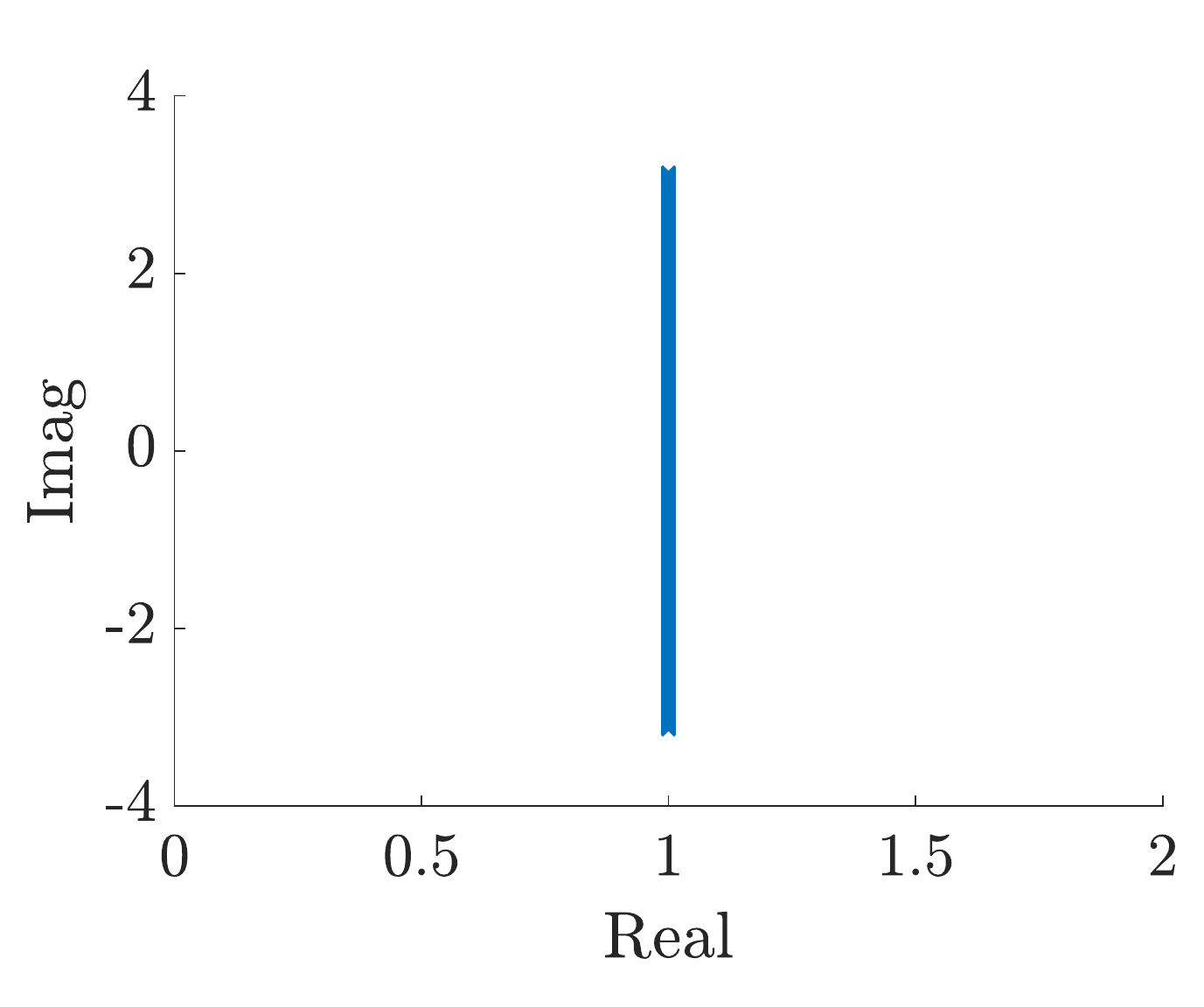}
        \caption{$\A$}
    \end{subfigure}
    \begin{subfigure}[b]{0.48\textwidth}
        \includegraphics[width=\textwidth,clip=true,trim = 3cm 9cm 4cm 9cm]{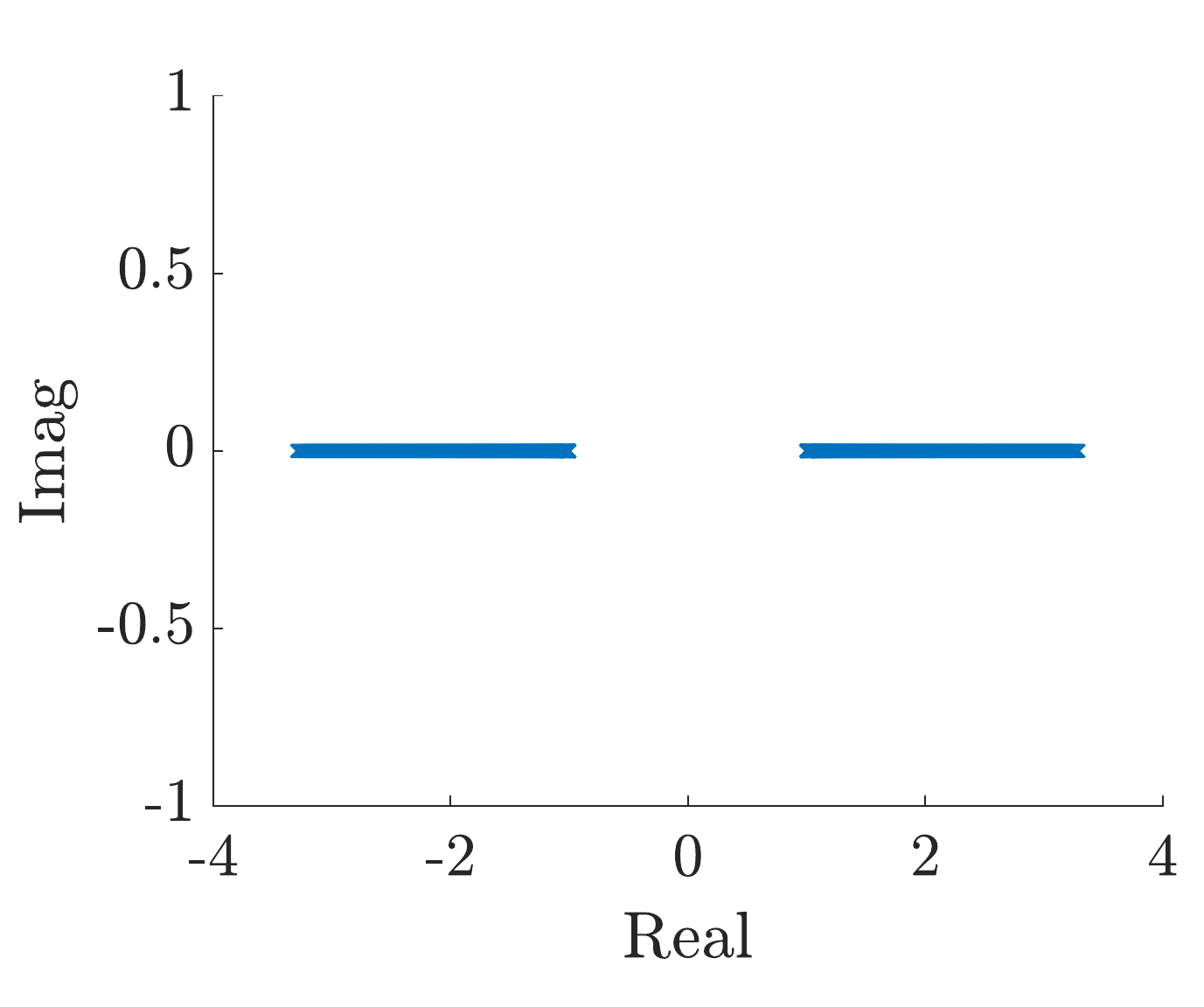}
        \caption{$\Y\A$}
    \end{subfigure}
\caption{Eigenvalues of $\ar^{-1}\A$ and $\ar^{-1}\Y\A$ for \cref{ex:1} with $n = 2047$. }
\label{fig:ex1_ar}
\end{figure}

We now consider $\af$, which is dense since $|f(\fvar)| = (2-2\cos(\fvar))\sqrt{1+\fvar^2}$. Accordingly, as well as applying $\af$ exactly---to confirm our theoretical results---we  approximate $\af$ via our V-cycle multigrid method with 2 pre- and 2 post-smoothing steps,  the coarsest grid of dimension 15, and $\omega = 0.1$ for GMRES, $\omega = 0.4$ for LSQR and $\omega = 0.5$ for MINRES. For LSQR, multigrid with $\af$ gave lower timings and iteration counts than multigrid with $\A$, and so was used instead. 

Iteration counts and CPU times (excluding the time to construct $\af$ but including the time to set up the multigrid preconditioner) are given in \cref{tab:ex1_af}. Both $\af$ and its multigrid approximation give lower iteration counts than $\ar$, with the multigrid method especially effective for MINRES applied to the symmetrized system. However, timings are higher than for $\ar$ since the $O(n\log(n))$ multigrid method is more expensive than the $O(n)$ solve with $\ar$. The eigenvalues of $\af^{-1}\Y\A$, when $n = 2047$, are as expected from \cref{thm:absfeigs} (see \cref{fig:ex1_af}), since all eigenvalues lie in $[-1,1]$. Indeed, most cluster at the endpoints of this interval. The eigenvalues of $\af^{-1}\A$ are also localized, but not as clustered, indicating that the spectrum of $\af^{-1}\A$ may differ significantly from that of $\af^{-1}\Y\A$. 

\begin{table}[htbp]
{\footnotesize
\setlength{\tabcolsep}{.3em}
\caption{Iteration numbers and CPU times (in parentheses) for the exact preconditioner $\af$ and its multigrid approximation $MG(\af)$ for \cref{ex:1}.  The second column shows the time needed to compute the elements of $\af$. }
\label{tab:ex1_af}
\begin{center}
\begin{tabular}{r r *{3}{|rrrr}}
$n$ & $\af$ time & \multicolumn{4}{c|}{GMRES} &  \multicolumn{4}{c|}{LSQR} &  \multicolumn{4}{c}{MINRES}\\ 
& & \multicolumn{2}{c}{$\A$} & \multicolumn{2}{c}{$MG(\A$)} & \multicolumn{2}{|c}{$\A$} & \multicolumn{2}{c}{$MG(\af)$}  & \multicolumn{2}{|c}{$\af$} &  \multicolumn{2}{c}{$MG(\af)$}\\
\hline
1023 & 6.4 & 1 & (0.06) & 39 & (0.15) & 1 & (0.068) & 33 & (0.11) & 11 & (0.11) & 24 & (0.044) \\
2047 &  21 & 1 & (0.28) & 41 & (0.28) & 1 & (0.33) & 37 & (0.24) & 11 & (0.6) & 24 & (0.082) \\
4095 &  73 & 1 & (1.9) & 39 & (0.36) & 1 & (1.9) & 39 & (0.27) & 12 & (3.6) & 25 & (0.096) \\
8191 & $2.5\times 10^2$ & 1 & (8.7) & 42 & (0.88) & 1 & (11) & 43 & (0.67) & 12 & ( 22) & 25 & (0.21) \\
\end{tabular}
\end{center}
}
\end{table}

\begin{figure}
\centering
\begin{subfigure}[b]{0.48\textwidth}
        \includegraphics[width=\textwidth,clip=true,trim = 3cm 9cm 4cm 9cm]{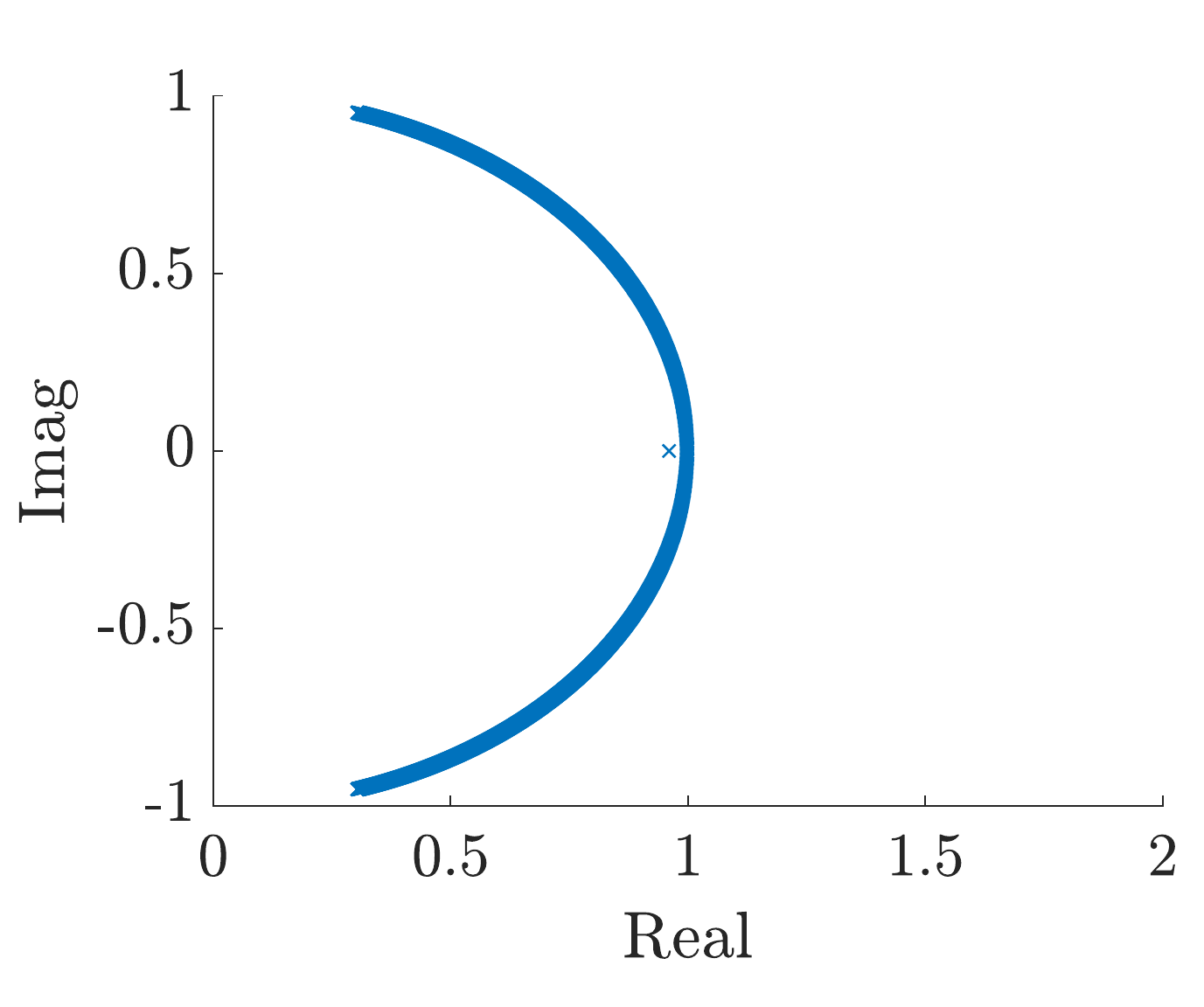}
        \caption{$\A$}
    \end{subfigure}
    \begin{subfigure}[b]{0.48\textwidth}
        \includegraphics[width=\textwidth,clip=true,trim = 3cm 9cm 4cm 9cm]{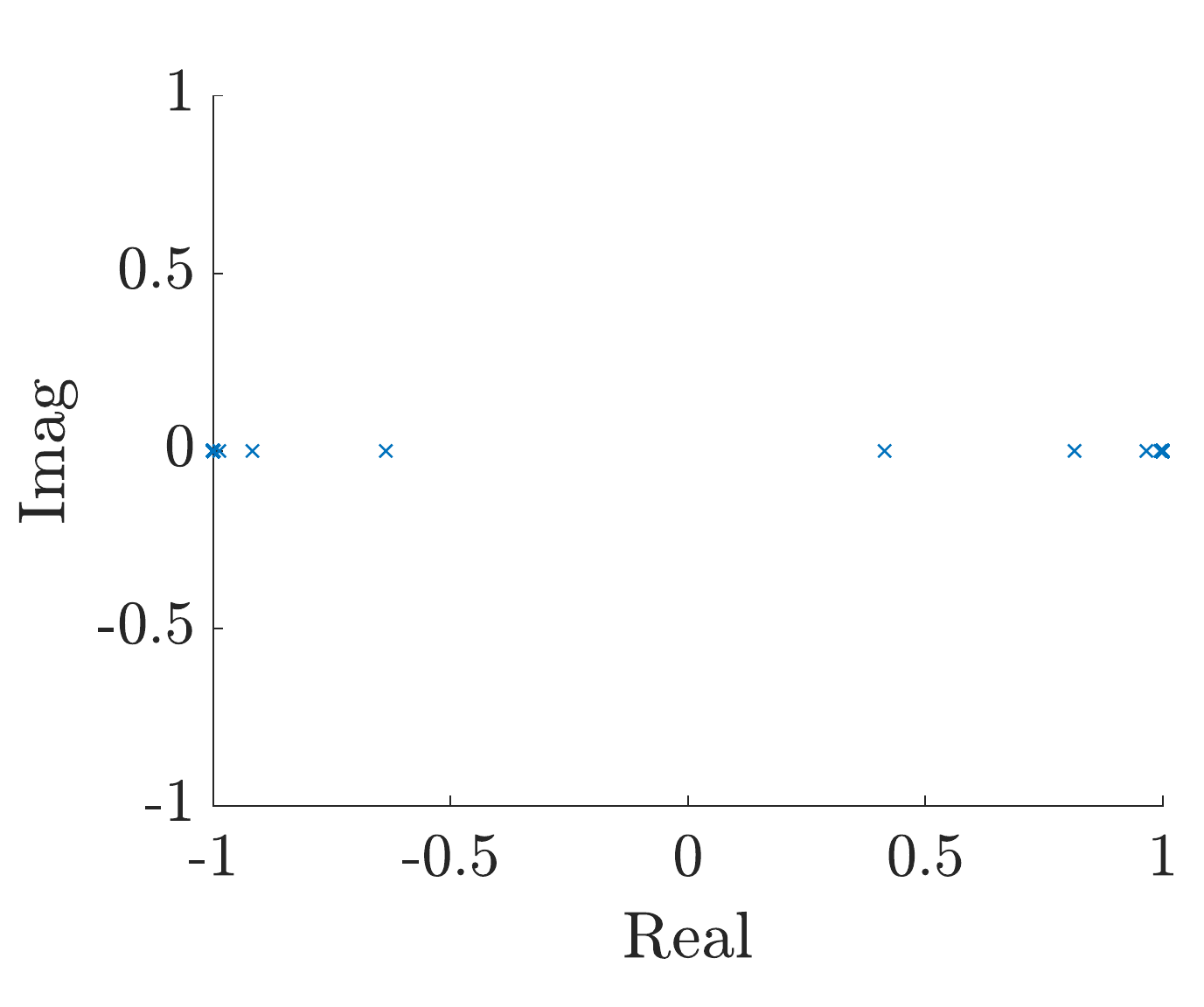}
        \caption{$\Y\A$}
    \end{subfigure}
\caption{Eigenvalues of $\af^{-1}\A$ and $\af^{-1}\Y\A$ for \cref{ex:1} with $n = 2047$. }
\label{fig:ex1_af}
\end{figure}

\end{example}

\begin{example}
\label{ex:2}
We now examine the linear system obtained by discretising a fractional diffusion problem from \cite{BSS16}, which we alter so as to make it nonsymmetric. The problem is to find $u(x,t)$ that satisfies 
\begin{equation}
\label{eqn:fracdiffeqn}
\frac{\partial u(x,t)}{\partial t}  = d_+ \frac{\partial_+^\alpha u(x,t)}{\partial x^\alpha} + d_-\frac{\partial^\alpha_- u(x,t)}{\partial x^\alpha} + f(x,t),\quad (x,t) \in (0,1) \times (0,1],
\end{equation}
where $\alpha \in (1,2)$, and $d_+$ and $d_-$ are nonnegative constants. We impose the absorbing boundary conditions $u(x\le 0,t) = u(x\ge 1,t) = 0$, $t \in [0,1]$, while 
$u(x,0) = 80\sin(20x)\cos(10x), \,  x  \in [0,1]. $
The Riemann-Liouville derivatives in \cref{eqn:fracdiffeqn} are 
\begin{align*}
 \frac{\partial_+^\alpha u(x,t)}{\partial x^\alpha} &= \frac{1}{\Gamma(n-\alpha)} \frac{\partial^n}{\partial x^n}\int_L^x \frac{u(\xi,t)}{(x-\xi)^{\alpha+1-n} }\dd\xi,\\
 \frac{\partial_-^\alpha u(x,t)}{\partial x^\alpha} &= \frac{(-1)^n}{\Gamma(n-\alpha)} \frac{\partial^n}{\partial x^n}\int_x^R \frac{u(\xi,t)}{(\xi-x)^{\alpha+1-n}} \dd\xi,
\end{align*}
where $n$ is the integer for which $n-1< \alpha \le n$. 

Discretising by the shifted Gr\"unwald-Letnikov method in space, and the backward Euler method in time~\cite{MeTa04,MeTa06}, gives the linear system 
\begin{equation}
\underbrace{(\nu I + d_+ L_\alpha + d_-L_\alpha^T)}_{A} u^m = \nu u^{m-1} + h^\alpha f^{m},\label{eqn:fracdiff}
\end{equation}
\begin{equation}
\label{eqn:fracdiffmat}
L _{\alpha}= -\begin{bmatrix}
 g_{\alpha,1}  &g_{\alpha,0} & & & \\
g_{\alpha,2}& g_{\alpha,1}  & g_{\alpha,0} & & \\
\vdots & \ddots & \ddots & \ddots & \\
g_{\alpha,n-1}& & \ddots& \ddots & g_{\alpha,0} \\
g_{\alpha,n}&g_{\alpha,n-1} &\dots & g_{\alpha,2}& g_{\alpha,1}\\
 \end{bmatrix}, 
 \end{equation}
 where $g_{\alpha,k} = (-1)^k \binom{\alpha}{k}$, $\nu =\frac{\tau}{h^\alpha}$ and $h = \frac{1}{n+1}$.
We set $\tau = 1/\lceil n^\alpha \rceil$, which makes $\nu$ constant, so that all the theory of \cref{sec:idealpre} can be directly applied, but comparable results are obtained  $\tau = 1/n$. 
Stated CPU times and iteration counts in this example are for the first time step. (Iteration counts and timings decrease at later time steps.) CPU times include the preconditioner setup time and solve time. 

Entries of $A$ in \cref{eqn:fracdiff} are generated by~\cite{DMS16}  
\begin{equation*}
\varphi(\theta) = \nu + d_+ f_\alpha(\fvar) + d_- f_\alpha(-\fvar), \qquad f_\alpha(\fvar) = -\ee^{-\ii\fvar}\left(1-\ee^{\ii\theta} \right)^\alpha.
\end{equation*}
The real part of $\varphi$ is essentially positive, so $\ar=(\A+\A^T)/2$ is positive definite. However, since $\ar$ is dense we approximate it by our V-cycle multigrid method (analysed in~\cite{PaSu12}) with the coarsest grid of dimension 127, 2 pre- and 2 post-smoothing steps, and $\omega = 0.7$ for all Krylov solvers. 
The matrix $\af$ is also dense and positive definite, and we approximate it using two different approaches. The first is the absolute value Strang preconditioner discussed at the end of \cref{sec:idealpretoepabs}. The second is multigrid (with the same parameters as for $\ar$, except that we use 1 pre- and post-smoothing step) applied to a banded Toeplitz approximation of $\af$. Specifically, if $r$ and $c$ are the first row and column of $\af$, when $\alpha= 1.25$ we compute the first 50 elements in $r$ and $c$, and when $\alpha>1.25$ we take the first  $\lceil \beta (1.1)^{\log_2(n+1)}\rceil$ elements in $r$ and $c$, where $\beta = 40$ when $\alpha = 1.5$ and $\beta = 100$ when $\alpha = 1.75$. This balances the time to compute these coefficients, and the resulting MINRES iteration count. 

We see from \cref{tab:ex2_ar_alpha} that our approximations to $\ar$ and $\af$  are robust with respect to $n$, but both require slightly more iterations for larger $\alpha$. The multigrid preconditioner for $\ar$ requires fewer iterations than the circulant, but the latter results in a lower CPU time because the preconditioner application is cheap, and indeed the absolute value preconditioner with MINRES is the fastest method overall. Of the multigrid methods, the approximation to $\ar$ with MINRES is fastest for $\alpha \le 1.5$, while the multigrid approximation of $\A$ with GMRES is slightly faster for large $\alpha$.

\begin{table}[htbp]
{\footnotesize
\setlength{\tabcolsep}{.3em}
\caption{Iteration numbers and CPU times (in parentheses) for the Strang circulant $\Ce$, absolute value Strang circulant $|\Ce|$ and multigrid preconditioners when $d_+ = 0.5$ and $d_-=1$ for \cref{ex:2}.  }
\label{tab:ex2_ar_alpha}
\begin{center}
\begin{tabular}{r r  *{2}{|rrrr} |rrrrrr}
$\alpha$ & $n$ & \multicolumn{4}{c|}{GMRES} &  \multicolumn{4}{c|}{LSQR} &  \multicolumn{6}{c}{MINRES}\\ 
& & \multicolumn{2}{c}{$\Ce$} & \multicolumn{2}{c|}{$MG(\A)$} &  \multicolumn{2}{c}{$\Ce$} & \multicolumn{2}{c|}{$MG(\A)$}   &  \multicolumn{2}{c}{$|\Ce|$}  &  \multicolumn{2}{c}{$MG(\af)$} & \multicolumn{2}{c}{$MG(\ar)$} \\
\hline
\multirow{5}{*}{1.25} & 1023 & 5 & (0.01) & 4 & (0.016)& 6 & (0.011) & 6 & (0.02)& 10 & (0.0084) & 12 & (0.18)& 8 & (0.014)\\
& 4095 & 6 & (0.017) & 4 & (0.045)& 6 & (0.016) & 6 & (0.053)& 10 & (0.013) & 12 & (0.18)& 8 & (0.043)\\
& 16383 & 6 & (0.065) & 4 & (0.17)& 6 & (0.066) & 7 & (0.22)& 10 & (0.054) & 13 & (0.32)& 8 & (0.17)\\
& 65535 & 6 & (0.25) & 4 & (0.66)& 6 & (0.25) & 7 & (0.76)& 9 & (0.19) & 13 & (0.82)& 8 & (0.6)\\
& 262143 & 6 & (0.99) & 4 & (4.4)& 6 & (  1) & 7 & (  5)& 9 & (0.72) & 13 & (4.5)& 8 & (4.3)\\
\hline
\multirow{5}{*}{1.5} & 1023 & 6 & (0.0062) & 4 & (0.021)& 6 & (0.0062) & 7 & (0.025)& 10 & (0.0048) & 13 & (0.37)& 8 & (0.013)\\
& 4095 & 6 & (0.018) & 4 & (0.046)& 6 & (0.018) & 7 & (0.061)& 10 & (0.015) & 13 & (0.5)& 8 & (0.044)\\
& 16383 & 6 & (0.062) & 4 & (0.17)& 6 & (0.067) & 7 & (0.21)& 9 & (0.05) & 13 & (0.76)& 9 & (0.19)\\
& 65535 & 6 & (0.24) & 5 & (0.7)& 7 & (0.28) & 8 & (0.8)& 9 & (0.19) & 13 & (1.4)& 9 & (0.66)\\
& 262143 & 6 & (0.93) & 5 & (5.2)& 7 & (1.1) & 8 & (5.7)& 9 & (0.72) & 15 & (6.1)& 9 & (4.7)\\
\hline
\multirow{5}{*}{1.75} & 1023 & 6 & (0.0085) & 5 & (0.043)& 7 & (0.0088) & 7 & (0.021)& 9 & (0.0062) & 13 & (1.6)& 9 & (0.014)\\
& 4095 & 6 & (0.015) & 5 & (0.046)& 7 & (0.015) & 8 & (0.058)& 9 & (0.01) & 15 & (2.2)& 9 & (0.036)\\
& 16383 & 6 & (0.062) & 5 & (0.2)& 7 & (0.075) & 8 & (0.24)& 9 & (0.049) & 15 & (3.2)& 10 & (0.21)\\
& 65535 & 6 & (0.24) & 5 & (0.71)& 7 & (0.28) & 8 & (0.81)& 9 & (0.19) & 15 & (4.8)& 11 & (0.75)\\
& 262143 & 6 & (0.9) & 5 & (5.2)& 7 & (1.1) & 9 & (6.3)& 9 & (0.72) & 16 & ( 11)& 11 & (5.7)\\

\end{tabular}
\end{center}
}
\end{table}

In \cref{tab:ex2_ar_diff} we investigate the effect of $d_+$ and $d_-$, i.e., of nonsymmetry, on the preconditioners. The results are unchanged when $d_+$ and $d_-$ are swapped, so we tabulate results for $d_+\le d_-$ only. As expected, our approximation to $\ar$ is best suited to problems for which $d_+$ and $d_-$ do not differ too much. 
The hardest problem for $\ar$ is when $d_+=0$, since in this case $\A$ is a Hessenberg matrix, and hence highly nonsymmetric. However, even here the iteration numbers are fairly low, since the eigenvalues are bounded away from the origin independently of $n$. The circulant and multigrid preconditioners based on $\af$ are not greatly affected by altering $d_+$ and $d_-$.

\begin{table}[htbp]
{\footnotesize
\setlength{\tabcolsep}{.3em}
\caption{Iteration numbers and CPU times (in parentheses) for the Strang circulant $\Ce$, absolute value Strang circulant $|\Ce|$ and multigrid preconditioners when $\alpha = 1.5$ for \cref{ex:2}.  }
\label{tab:ex2_ar_diff}
\begin{center}
\begin{tabular}{r r  *{2}{|rrrr} |rrrrrr}
$(d_+,d_-)$ & $n$ & \multicolumn{4}{c|}{GMRES} &  \multicolumn{4}{c|}{LSQR} &  \multicolumn{6}{c}{MINRES}\\ 
& & \multicolumn{2}{c}{$\Ce$} & \multicolumn{2}{c|}{$MG(\A)$} &  \multicolumn{2}{c}{$\Ce$} & \multicolumn{2}{c|}{$MG(\A)$}   &  \multicolumn{2}{c}{$|\Ce|$}  &  \multicolumn{2}{c}{$MG(\af)$} & \multicolumn{2}{c}{$MG(\ar)$} \\
\hline
\multirow{4}{*}{(0,3)}& 4095 & 5 & (0.031) & 5 & (0.053)& 7 & (0.02) & 5 & (0.059)& 10 & (0.016) & 10 & (0.47)& 13 & (0.049)\\
& 16383 & 4 & (0.044) & 5 & (0.21)& 7 & (0.078) & 5 & (0.23)& 10 & (0.058) & 10 & (0.83)& 13 & (0.27)\\
& 65535 & 4 & (0.18) & 6 & (0.84)& 7 & (0.29) & 6 & (0.93)& 10 & (0.22) & 10 & (1.6)& 14 & (0.97)\\
& 262143 & 4 & (0.75) & 6 & (6.2)& 7 & (1.2) & 6 & (6.7)& 11 & (0.92) & 11 & (6.8)& 14 & (7.1)\\
\hline
\multirow{4}{*}{(1,3)}& 4095 & 7 & (0.015) & 5 & (0.042)& 7 & (0.014) & 5 & (0.045)& 10 & (0.013) & 10 & (0.4)& 9 & (0.037)\\
& 16383 & 7 & (0.072) & 5 & (0.21)& 7 & (0.078) & 5 & (0.23)& 11 & (0.06) & 10 & (0.81)& 10 & (0.21)\\
& 65535 & 7 & (0.29) & 5 & (0.71)& 8 & (0.33) & 5 & (0.77)& 11 & (0.24) & 11 & (1.6)& 10 & (0.7)\\
& 262143 & 7 & (1.1) & 6 & (6.2)& 8 & (1.3) & 6 & (6.6)& 11 & (0.93) & 11 & (6.6)& 10 & (5.3)\\
\hline
\multirow{4}{*}{(1,1)}& 4095 & 6 & (0.013) & 4 & (0.031)& 6 & (0.013) & 5 & (0.041)& 10 & (0.01) & 9 & (0.39)& 9 & (0.034)\\
& 16383 & 6 & (0.064) & 4 & (0.17)& 6 & (0.068) & 5 & (0.23)& 10 & (0.058) & 9 & (0.79)& 9 & (0.19)\\
& 65535 & 6 & (0.25) & 4 & (0.57)& 6 & (0.26) & 5 & (0.77)& 9 & (0.19) & 9 & (1.4)& 9 & (0.64)\\
& 262143 & 6 & (0.93) & 5 & (5.2)& 7 & (1.2) & 5 & (5.8)& 9 & (0.79) & 9 & (5.9)& 9 & (4.9)\\
\end{tabular}
\end{center}
}
\end{table}

The low iteration numbers and mesh-size independent results for $\ar$ in \cref{tab:ex2_ar_diff} are explained by \cref{thm:symeigs} and the relatively small upper bound \cref{eqn:arscalar}, which describes how far eigenvalues of $\ar^{-1}\Y\A$ can deviate from 1 in magnitude. This bound is 0 when $d_+=d_-$, or when $\alpha= 2$, since then $\A$ is symmetric. However,  \cref{tab:ex2_al_maxeig} shows that even when $\A$ is nonsymmetric the bound is quite small. Additionally, it does not change when the values of $d_+$ and $d_-$ are swapped. 

\begin{table}[htbp]
{\footnotesize
\setlength{\tabcolsep}{.3em}
\caption{Upper bound  in \cref{eqn:arscalar} for \cref{ex:2}.  }
\label{tab:ex2_al_maxeig}
\begin{center}
\begin{tabular}{r |  *{4}{r}}
$\alpha$ & \multicolumn{4}{c}{$(d_+,d_-)$}\\
 & (0,3) & (1,3) & (0.5,1) & (1,1) \\
\hline
1 & 1.13 & 0.67 & 0.25 & 0.00\\
1.25 & 0.70 & 0.39 & 0.17 & 0.00\\
1.5 & 0.42 & 0.23 & 0.11 & 0.00\\
1.75 & 0.20 & 0.11 & 0.05 & 0.00\\
\end{tabular} 
\end{center}
}
\end{table}

\end{example}

\begin{example}
\label{ex:3}
We now solve a two-level Toeplitz problem that also arises from fractional diffusion and is based on the symmetric problem in~\cite{BSS16}. We seek $u(x,y,t)$ in the domain  $\Omega = (0,1)^2 \times (0,1]$ that satisfies
\begin{align*}
\label{eqn:fracdiffeqn2d}
\frac{\partial u(x,y,t)}{\partial t}  =& d_+ \frac{\partial_+^\alpha u(x,y,t)}{\partial x^\alpha} + d_-\frac{\partial^\alpha_- u(x,y,t)}{\partial x^\alpha}\\& + e_+ \frac{\partial_+^\beta u(x,y,t)}{{\partial y^\beta}} + e_-\frac{\partial^\beta_- u(x,y,t)}{{\partial y^\beta}} + f(x,y,t),
\end{align*}
where $\alpha,\beta \in (1,2)$, and $d_+$, $d_-$, $e_+$ and $e_-$ are nonnegative constants. We impose absorbing boundary conditions, and the initial condition is 
$u(x,0) =100\sin(10x)\cos(y) + \sin(10t)xy$.

We again discretize by the shifted Gr\"unwald-Letnikov method in space, and the backward Euler method in time~\cite{MeTa04,MeTa06}, which leads to the following linear system:
\begin{equation}
\label{eqn:fracdiff2d}
\underbrace{(I_{n_xn_y} - I_{n_y}\otimes L_x - L_y\otimes I_{n_x})}_{\A} u^m =  u^{m-1} + \tau f^{m}.
\end{equation}
Here  $n_x$ and $n_y$ are the number of spatial degrees of freedom in the $x$ and $y$ directions, respectively; we choose $n_x=n_y=n$. Also, 
\[L_x = \frac{\tau}{h^\alpha_x}(d_+ L_{\alpha} + d_- L_{\alpha}^T),  \qquad L_y = \frac{\tau}{{h^\beta_y}}(e_+ L_{\beta} + e_- L_{\beta}^T),\] 
where $L_{\alpha}$
 is given by \cref{eqn:fracdiffmat}, and $h_x=1/(n_x+1)$ and $h_y=1/(n_y+1)$ are the mesh widths in the $x$ and $y$ directions. Unless $\alpha=\beta$,  $\tau/h_x^\alpha$ and $\tau/h_y^\beta$ cannot both be independent of $n$; we choose $\tau = 1/\lceil n_x^\alpha\rceil$. Note that the theory for $\ar$ still applies in this case. Stated CPU times and iteration counts are again for the first time step.

It is too costly to approximate $\af$ by a banded Toeplitz matrix or a multigrid method, simply because it is expensive to obtain the Fourier coefficients of $|f|$, and so we present results for a multigrid approximation to $\ar$ only below. 
We also apply the nonsymmetric block circulant $\Ce = I_{n_xn_y} - I_{n_y}\otimes C_x - C_y\otimes I_{n_x}$ preconditioner, and symmetric positive definite block circulant $|\Ce| = I_{n_xn_y} + I_{n_y}\otimes |C_x| + |C_y|\otimes I_{n_x}$, where $C_x$ and $C_y$ are Strang circulant approximations to $L_x$ and $L_y$, respectively. Our multigrid method comprises 4 pre- and 4 post-smoothing steps, and a damping parameter of 0.9. The coarsest grid has $n_x=n_y = 7$. 

The results in \cref{tab:ex3} show that the multigrid approximation of $\ar$ gives mesh-size independent iteration counts, and that MINRES with this preconditioner is the fastest method for larger problems. For the block circulant preconditioners we see different behaviour depending on whether $\alpha>\beta$. Specifically, when  $\alpha>\beta$, $\tau/h_y^\beta\rightarrow 0$ as $n\rightarrow \infty$, which makes this problem easier to solve in some sense. On the other hand, when $\alpha < \beta$ the problems become harder to solve as $n$ increases, and the block circulant with LSQR and MINRES suffer from growing iteration counts. 

\begin{table}[htbp]
{\footnotesize
\setlength{\tabcolsep}{.3em}
\caption{Iteration numbers and CPU times (in parentheses) for the  circulant preconditioners $\Ce$ and $|\Ce|$, and multigrid preconditioners when $d_+ = 2$, $d_-=0.5$, $e_+ = 0.3$ and $e_-=1$ for \cref{ex:3}.  }
\label{tab:ex3}
\begin{center}
\begin{tabular}{r r  *{3}{|rrrr} }
$(\alpha,\beta)$ & $n^2$ & \multicolumn{4}{c|}{GMRES} &  \multicolumn{4}{c|}{LSQR} &  \multicolumn{4}{c}{MINRES}\\ 
& & \multicolumn{2}{c}{$\Ce$} & \multicolumn{2}{c|}{$MG(\A)$} &  \multicolumn{2}{c}{$\Ce$} & \multicolumn{2}{c|}{$MG(\A)$}   &  \multicolumn{2}{c}{$|\Ce|$}  & \multicolumn{2}{c}{$MG(\ar)$} \\
\hline
\multirow{3}{*}{(1.5,1.25)} & 961 & 16 & (0.032) & 5 & (0.011)& 23 & (0.033)& 5 & (0.014) & 42 & (0.028) & 12 & (0.013)\\
& 16129 & 15 & (0.12) & 5 & (0.058)& 21 & (0.11)& 6 & (0.07) & 39 & (0.12) & 12 & (0.07)\\
& 261121 & 14 & (1.5) & 5 & (1.1)& 18 & (1.4)& 6 & (1.3) & 34 & (1.5) & 12 & (1.0)\\
\hline
\multirow{3}{*}{(1.5,1.75)} & 961 & 21 & (0.029) & 4 & (0.0086)& 28 & (0.038)& 4 & (0.0099) & 43 & (0.027) & 10 & (0.01)\\
& 16129 & 21 & (0.16) & 4 & (0.051)& 35 & (0.2)& 5 & (0.065) & 57 & (0.19) & 10 & (0.049)\\
& 261121 & 20 & (2.1) & 5 & (1.2)& 40 & (3.1)& 5 & (1.0) & 67 & (2.8) & 12 & (0.97)
\end{tabular}
\end{center}
}
\end{table}

\end{example}

\section{Conclusions}
\label{sec:conc}
In this paper we  presented two novel ideal preconditioners for (multilevel) Toeplitz matrices by considering the generating function $f$. The first, $\ar$ is formed using the real part of $f$. While it works best when the (multilevel) Toeplitz matrix is close to symmetric, it is reasonably robust with respect to the degree of nonsymmetry. This performance is likely attributable to the eigenvalue distribution, which remains bounded away from the origin. Our second preconditioner, $\af$, is based on $|f|$. Its performance is less affected by nonsymmetry, but it is more challenging to construct efficient approximations to $\af$ in the multilevel case. 

Our numerical results not only illustrate the effectiveness of the preconditioners, they highlight the value of  symmetrization, which enables us to compute bounds on convergence rates that depend only on the scalar function $f$. Additionally, the combination of symmetrization and preconditioned MINRES can be more computationally efficient than applying GMRES or LSQR to these problems. 

\section*{Acknowledgments}
The author would like to thank Mariarosa Mazza and Stefano Serra Capizzano for their careful reading of an earlier version of this manuscript and helpful discussions, and the anonymous referees for their suggestions. All data underpinning this publication are openly available from Zenodo at \url{http://doi.org/10.5281/zenodo.1327565}.

\bibliographystyle{siamplain}
\bibliography{WP1}      
\end{document}